\documentclass[12pt,oneside]{amsart}

\usepackage{amssymb,amsmath,mathtools,fourier-otf}
\usepackage{upgreek}
\usepackage{aliascnt}
\usepackage[inline]{enumitem}
\usepackage{verbatim}
\usepackage[a4paper,left=25mm,right=25mm,top=20mm,bottom=20mm,footskip=10mm,headheight=14pt]{geometry}
\usepackage{fancyhdr}
\usepackage[
 backend=biber,
 style=numeric,
 sorting=nyt,
 sortcites=true,
 giveninits=true,
 maxbibnames=99,
 doi=true,
 url=false,
 isbn=false
]{biblatex}
\usepackage{xurl}
\usepackage[breaklinks,pdfstartview=FitH,pdfusetitle]{hyperref}
\hypersetup{
 colorlinks=true,
 linkcolor=black,
 citecolor=black,
 urlcolor=black,
}

\newtheorem{theorem}{Theorem}[section]
\newaliascnt{corollary}{theorem}
\newtheorem{corollary}[corollary]{Corollary}
\aliascntresetthe{corollary}
\newaliascnt{lemma}{theorem}
\newtheorem{lemma}[lemma]{Lemma}
\aliascntresetthe{lemma}
\newaliascnt{proposition}{theorem}
\newtheorem{proposition}[proposition]{Proposition}
\aliascntresetthe{proposition}
\newaliascnt{claim}{theorem}

\aliascntresetthe{claim}
\theoremstyle{definition}
\newaliascnt{definition}{theorem}
\newtheorem{definition}[definition]{Definition}
\aliascntresetthe{definition}
\newaliascnt{remark}{theorem}
\newtheorem{remark}[remark]{Remark}
\aliascntresetthe{remark}

\DeclareMathOperator{\diam}{diam}
\newcommand{\Ntree}{\mathsf N}
\newcommand{\T}{\mathsf T}
\newcommand{\Tprime}{\mathsf T'}
\newcommand{\rootv}{\mathsf r}
\newcommand{\parent}{\mathsf p}
\newcommand{\pb}{\mathord\Diamond}
\newcommand{\rep}{\mathrm{x}}
\newcommand{\bdry}{\partial}
\newcommand{\N}{\mathbb N}
\newcommand{\eps}{\varepsilon}

\title{Regular Ultrametric Skeletons}

\author[M. Mendel]{Manor Mendel}
\address{Department of Mathematics and Computer Science\\
The Open University of Israel\\
1 University Road\\
Raanana 43107, Israel}
\email{manorme@openu.ac.il}

\date{}
\keywords{ultrametric skeleton, metric Ramsey theory, bi-Lipschitz embedding}
\subjclass[2020]{51F30, 28A78, 46B85}

\begin{document}

\begin{abstract}
The ultrametric skeleton theorem extracts
from every compact metric probability space a subset of
ultrametric distortion $O(1/\varepsilon)$ that carries a measure whose balls
are controlled by the $(1-\varepsilon)$-power of the original measure on dilated concentric balls.
We prove a two-sided version for arbitrary
compact metric spaces:
for every ball centered on the skeleton, the
skeleton measure also has a lower bound in terms of the original measure on
a smaller nonconcentric ball contained in it.
We also give a short proof of
the original skeleton theorem and improve the dilation of its
control balls from $\exp(O(1/\varepsilon^2))$ to
$O(1/\varepsilon)$.
\end{abstract}

\maketitle

\section{Introduction}

Fix a metric space $(X,d)$, a point $x\in X$, and a radius
$r\in[0,\infty)$.  The corresponding closed and open balls are denoted,
respectively, by
\[
 B_d(x,r)=\{z\in X:d(x,z)\le r\},
 \qquad
 B_d^\circ(x,r)=\{z\in X:d(x,z)<r\}.
\]
An ultrametric space is a metric
space $(U,\rho)$ satisfying the strengthened triangle inequality
\[
 \rho(x,y)\le\max\{\rho(x,z),\rho(y,z)\}
 \qquad(x,y,z\in U).
\]
We say that $(S,d)$ has ultrametric distortion at most $D$ if there is an
ultrametric $\rho$ on $S$ such that
\[
 d(x,y)\le \rho(x,y)\le D\cdot d(x,y), \qquad \forall x,y\in S.
\]
Ultrametrics are the metric incarnation of rooted hierarchies.
Many questions that are difficult in an arbitrary metric space become tractable on an (approximately) ultrametric space.

The ultrametric skeleton theorem~\cite{MN-skeleton,MN-hausdorff} states that for every compact metric
space $(X,d)$, every Borel probability measure $\mu$ on $X$, and every
$\eps\in(0,1)$, there exist a compact subset $S\subseteq X$ of ultrametric
distortion $O(1/\eps)$ and a Borel probability measure $\nu$ supported on
$S$ such that, for every $x\in X$ and $r\ge0$,
\begin{equation}
 \nu(B_d(x,r))\le
 \mu(B_d(x,C_\eps r))^{1-\eps}.
\label{eq:classical-skeleton-intro}
\end{equation}
Here $C_\eps$ depends only on $\eps$.
The argument in
\cite[Theorem~1.1 and the discussion following
Theorem~1.2]{MN-skeleton} gives
$C_\eps=\exp(O(\eps^{-2}))$.
For doubling spaces, a later argument~\cite{mendel2021simple} gives linear dilation \(C_\varepsilon=O(1/\varepsilon)\), at the expense of a doubling-dependent factor in the measure estimate.

In rough terms, the estimate prevents \(\nu\) from concentrating on balls to which \(\mu\) assigns little mass.
It therefore certifies largeness of $S$ simultaneously at all locations and scales.
For example, the theorem yields the sharp nonlinear Dvoretzky statement that
every compact metric space has a closed subset of Hausdorff dimension at
least a $(1-\eps)$-fraction of the original dimension and ultrametric
distortion $O(1/\eps)$ \cite{MN-hausdorff,MN-skeleton}.
It has applications to metric Ramsey theory,
online algorithms, proximity data structures, generic chaining, and
geometric measure theory; see
\cite{BLMN05,BBM,MN07,Tal05,KMZ,MZ,MN-hausdorff,MN-skeleton} and the references therein.
A two-sided version was subsequently
obtained for doubling spaces in~\cite{mendel2021dvoretzkytype}, where it
was used to prove an asymptotically sharp Dvoretzky-type theorem for Ahlfors-regular
spaces.

The results of this paper are as follows.
\begin{enumerate}[label=(\Alph*),leftmargin=2.2em]
\item\label{intro:simple}
We give a direct and short proof of the original ultrametric
skeleton theorem for arbitrary compact metric spaces.
\item\label{intro:dilation}
This proof yields a linear dilation $C_\eps=O(1/\eps)$ without a
doubling assumption.
\item\label{intro:regular}
We prove a regular version of the theorem that also gives a lower bound for the skeleton measure in terms of the \(\mu\)-mass of a smaller ball contained in the original ball.
\end{enumerate}

The proofs of \ref{intro:simple}--\ref{intro:dilation} apply the repeated
Bartal decomposition directly to subsets of $X$.  For
\ref{intro:regular}, we retain the net-tree and partial-boundary machinery
from~\cite{mendel2021dvoretzkytype}, which supplies the interior balls
needed for the lower estimate.  The core--envelope bookkeeping introduced
here replaces the use of the doubling hypothesis in that argument and
provides the probability-flow estimates used in both the original and the regular arguments.

\begin{theorem}[Regular ultrametric skeleton]
\label{thm:main}
There are universal constants $c_1,c_2>0$ such that the following holds.
For every compact metric probability space $(X,d,\mu)$,
and every $t\in\{2,3,\ldots\}$,
there exist a compact subset $S\subseteq X$, an ultrametric $\rho$ on $S$,
and a Borel probability measure $\nu$ supported on $S$ such that
\begin{enumerate}[label=(\roman*)]
\item\label{item:distortion}
 $d(x,y)\le\rho(x,y)\le32t\,d(x,y)$ for all $x,y\in S$;
\item\label{item:upper}
 for every $x\in X$ and $r\ge0$,
 \begin{equation}
 \label{eq:upper-main}
 \nu(B_d(x,r))\le \mu(B_d(x,c_1tr))^{1-1/t};
 \end{equation}
\item\label{item:lower}
 for every $y\in S$ and $r>0$, there is $z\in X$ such that
 \begin{equation}
 \label{eq:lower-main}
 \begin{gathered}
 B_d\left(z,{r}/{(c_2t)}\right)\subseteq B_d(y,r),\quad \text{and} \quad
 \nu(B_d(y,r))\ge
 \frac12\mu\bigl(B_d(z,{r}/{(c_2t)})\bigr)^{1-1/t}.
 \end{gathered}
 \end{equation}
\end{enumerate}
\end{theorem}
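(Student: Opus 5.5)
We would build a hierarchical net-tree on $X$ and then select one child at each node by a Bartal-type rule that compares powers of $\mu$-masses. Set scales $\Delta_k=\diam(X)\,\tau^{-k}$ with $\tau=\Theta(t)$; we commit to $\tau=8t$, which is consistent with the distortion constant $32t$. At each scale, the children of a node are clusters of diameter at most $\Delta_{k+1}$, each containing a net point $\rep(v)$. Inside each cluster we fix a \emph{core}, namely the ball $B_d(\rep(v),\Delta_{k+1}/(c t))$. Around each cluster we fix an \emph{envelope}, namely its $\Delta_{k+1}$-neighborhood. The skeleton $S$ is the set of points lying in selected nodes at every level; compactness comes from finiteness of nets at each level. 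We define $\rho(x,y)=\Delta_k$, where $k$ is the level at which $x$ and $y$ first separate. This $\rho$ is an ultrametric. Separation at level $k$ forces $d(x,y)\ge\Delta_{k}/(32t)$ by the padding of the partition, which gives item~\ref{item:distortion}.

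The measure $\nu$ is defined by probability flow down the tree. A node $v$ passes to each child $w$ the fraction
\[
\frac{\nu(v)\,\mu(E_w)^{1-1/t}}{\sum_{w'}\mu(E_{w'})^{1-1/t}},
\]
where $E_w$ is the envelope of $w$. To control the denominator we use the repeated Bartal decomposition. At each level we choose the radii of the partition, and hence which children survive, so that
\[
\sum_{w'}\mu(E_{w'})^{1-1/t}\ \ge\ \mu(E_v)^{1-1/t}/\text{const}.
\]
The mechanism is the standard one: among the $t$ annuli of width $\Delta_{k+1}$ inside a ball of radius $t\Delta_{k+1}$, one has $\mu(\text{outer})\le\mu(\text{inner})^{1-1/t}\mu(\text{whole})^{1/t}$. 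This pigeonhole over radii is where the exponent $1-1/t$ and the linear dilation $O(t)$ come from.

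Telescoping the flow then gives $\nu(w)\lesssim\mu(E_w)^{1-1/t}$ for every node $w$. For the upper bound~\ref{item:upper}, take $x\in X$ and $r\ge0$, and let $k$ be such that $\Delta_{k+1}\le r<\Delta_k$. The ball $B_d(x,r)$ meets only those skeleton nodes of level $k$ whose envelopes lie in $B_d(x,c_1tr)$. The crucial point is that, by the core–envelope bookkeeping, the selected nodes at a given level have pairwise disjoint envelopes. Hence
\[
\nu(B_d(x,r))\le\sum_w\mu(E_w)^{1-1/t}\le\Bigl(\sum_w\mu(E_w)\Bigr)^{1-1/t}
\]
only after a concavity argument in the right direction. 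We expect to need a single dominant node here, which the selection also guarantees.

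For the lower bound~\ref{item:lower}, take $y\in S$ and $r>0$, and choose $k$ with $\Delta_k\le r/2$. Let $w$ be the level-$k$ node containing $y$, and take $z=\rep(w')$ for the selected child $w'$ of $w$. The core ball $B_d(z,\Delta_{k+1}/(ct))$ lies in $B_d(y,r)$. The partial-boundary construction of~\cite{mendel2021dvoretzkytype} makes the selected child carry at least half of its parent's flow. Combined with the telescoped flow estimate, this gives
\[
\nu(B_d(y,r))\ge\nu(w')\ge\tfrac12\mu(\text{core of }w')^{1-1/t}.
\]
This yields~\eqref{eq:lower-main} with $c_2=O(1)$ after absorbing $\tau$.

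We expect the main obstacle to be reconciling the two directions without a doubling assumption. The upper bound wants flow normalized by large envelopes, while the lower bound wants it bounded below by small cores. The ratio of these two masses can be unbounded. Our first approach would be to choose the envelope radius itself by the annulus pigeonhole, so that the envelope is at most the core raised to the power $1-1/t$. That choice is what makes $\nu(w')\ge\frac12\mu(\mathrm{core})^{1-1/t}$ compatible with the upper bound.
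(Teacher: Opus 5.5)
\textbf{There is a genuine gap, and it is in the lower bound}, which is the new content of the theorem.

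Your flow splits $\nu(v)$ among the children in proportion to $\mu(E_w)^{1-1/t}$. Telescoping such a rule controls $\nu(w)$ only from above, because $\sum_{w'}\mu(E_{w'})^{1-1/t}$ can exceed $\mu(E_v)^{1-1/t}$ by an unbounded factor. The step ``the selected child carries at least half of its parent's flow; combined with the telescoped estimate this gives $\nu(w')\ge\frac12\mu(\mathrm{core})^{1-1/t}$'' therefore does not follow. Half of the parent's flow, iterated, is only $2^{-k}$ at depth $k$. Partial boundaries play no such role; in the paper they only supply the interior balls \eqref{eq:nettree-packing}.

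The following example shows what is really required. Take $N>2^t$ points at mutual distance $1$, each of mass $1/N$, and let $r<1$. Item~\ref{item:lower} forces $\nu(\{y\})\ge\frac12N^{-(1-1/t)}$ for every $y\in S$, and item~\ref{item:upper} forces $\nu(\{y\})\le N^{-(1-1/t)}$. So $S$ must keep roughly $N^{1-1/t}$ of the $N$ points: neither all of them nor just one.

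The missing idea is a pruning governed by both bounds at once. The paper does this in three steps:
\begin{enumerate}
\item It takes envelopes that partition the parent core in measure. These are the Bartal pieces $R_{i-1}\setminus Q_i$ of \autoref{lem:repeated-decomp}, not metric neighborhoods. Your $\Delta_{k+1}$-neighborhoods of clusters that are only $\Delta_k/(32t)=\Delta_{k+1}/4$ apart can overlap, so the disjointness you rely on is not available.
\item It uses the charge inequality \eqref{eq:repeat-strong} to show that $\eta(u)=\min\{\mu(C_u)/\widetilde\mu^{\Delta(u)}(\widetilde Z_u)^{1/t},\,\mu(E_u)^{1-1/t}\}$ is subadditive over children and nonincreasing along branches, with $\mu(C_u)^{1-1/t}\le\eta(u)\le\mu(E_u)^{1-1/t}$.
\item It applies \autoref{lem:balanced} to retain a subset of children carrying a flow with $\frac12\eta<\sigma\le\eta$.
\end{enumerate}
This sandwich is exactly what resolves the core/envelope mismatch you point out: no comparison of envelope mass with core mass is ever needed. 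Separately, your annulus inequality is not what pigeonholing gives. The valid form is $\mu(B_{i+1})\le\mu(B_i)\,(\mu(B_t)/\mu(B_0))^{1/t}$, which is why localized masses enter \eqref{eq:Bartal-sharp}.

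\textbf{The upper bound also has a gap.} The inequality $\sum_w\mu(E_w)^{1-1/t}\le(\sum_w\mu(E_w))^{1-1/t}$ is false; concavity gives the reverse. Nothing in your selection produces a dominant node. The fix is to use the distortion bound you already have. $B_d(x,r)\cap S$ has $\rho$-diameter at most $64tr$, so it lies in a single cylinder $\pi(\bdry v)$ with label at most $64tr$, whose envelope lies in $B_d(x,(128t+1)r)$. Choosing the level from $r$ instead of from $64tr$ is what produced several nodes.

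A normalization $\sum_{w'}\mu(E_{w'})^{1-1/t}\ge\mu(E_v)^{1-1/t}/K$ with $K>1$ also compounds to a factor $K^k$ at depth $k$. The paper's invariant propagates with no loss because of \eqref{eq:repeat-strong}.

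\textbf{The scale ratio is inconsistent.} With $\tau=8t$, your own pigeonhole over $t$ annuli of width $\Delta_{k+1}$ produces clusters of radius up to $t\Delta_{k+1}=\Delta_k/8$, not of diameter $\Delta_{k+1}$. With cores of radius $\Delta_{k+1}/(ct)$, your choice of $k$ gives an interior ball of radius of order $r/t^3$. A $t$-dependent $\tau$ cannot be absorbed into a universal $c_2$. The paper keeps consecutive labels within a factor of about $4$. It obtains the $1/t$ only from the Bartal separation of order $\Delta(u)/t$ and the refinement scale $s_u\approx\Delta(u)/(256t)$, which together yield \eqref{eq:hierarchy-interior}.
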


Without optimizing the constants, they may be chosen so that
$c_1\leq 129$ and $c_2\leq 5120$.

Taking \(t=\lceil1/\eps\rceil\), we have
\(1-1/t\geq1-\eps\) and \(t\leq2/\eps\).
Since \(\mu\) is a probability measure,
items \ref{item:distortion}--\ref{item:upper} recover
the original theorem with both distortion and dilation \(O(1/\eps)\).
Item~\ref{item:lower} also
controls the lower mass of balls in \(S\).  For example, if
\[
 a r^Q\leq\mu(B_d(x,r))\leq b r^Q
\]
for some constants $a,b,Q>0$ and over all $x\in X$ and $r\in(0,\diam(X)]$, then the two estimates
imply
\[
 \nu(B_d(y,r))\asymp_{a,b,Q,t} r^{Q(1-1/t)}
 \qquad(y\in S,\ r\in(0,\diam(S)]).
\]
Thus, in this case, \((S,d)\) is Ahlfors \(Q(1-1/t)\)-regular and has ultrametric distortion $O(t)$.

The paper is organized as follows.  Section~\ref{sec:decompositions} recalls
Bartal's decomposition and develops the core--envelope variant used here.
In Section~\ref{sec:warmup}, we provide a short proof of the original
ultrametric skeleton
theorem of~\cite{MN-skeleton} with linear dilation.
Section~\ref{sec:main-proof} adds the net-tree construction
from~\cite{mendel2021dvoretzkytype} needed
for the lower estimate and proves \autoref{thm:main}.

\subsection*{Acknowledgments}
The impetus to revisit this problem stemmed from an invitation
to present the papers~\cite{mendel2021simple,mendel2021dvoretzkytype}
at the Fields Institute.
The author developed a new
idea and presented it to GPT-5.6 Sol.
GPT developed a complete proof and prepared an initial draft.
The author then revised the draft and substantially simplified the proofs
with the assistance of GPT.
The arguments in the proofs were independently checked by the author.

\section{Decompositions}
\label{sec:decompositions}

We begin with the decomposition used to construct ultrametric subsets of
metric measure spaces.  For nonempty sets $A,B\subseteq X$, write
\[
 d(A,B):=\inf\{d(a,b):a\in A,\ b\in B\}.
\]

For a compact metric space $(Z,d)$ with a finite Borel measure $m$, a Borel
set $A\subseteq Z$, and $\Delta\ge0$, write
\begin{equation}
\label{eq:localized-mass}
 m^\Delta(A):=\sup_{x\in A}m(B_d(x,\Delta/4)\cap A),
\end{equation}
with $m^\Delta(\varnothing)=0$.
We will repeatedly use the following immediate monotonicity property: if
$A\subseteq C\subseteq Z$ are Borel and $0\le\delta\le\Delta$, then
\begin{equation}
\label{eq:localized-monotonicity}
 m^\delta(A)\le m^\Delta(C).
\end{equation}
Also, if $A$ is compact, $m(A)>0$, and $\delta>0$, then
$m^\delta(A)>0$.  Indeed, finitely many balls of radius $\delta/4$,
centered in $A$, cover $A$, and at least one has positive $m$-measure in
$A$.

The following is a parameterized compact variant of Bartal's decomposition lemma~\cite{bartal2021advances} that was proved
in \cite[Lemma~2.2]{mendel2021dvoretzkytype}.

\begin{lemma}[Bartal's decomposition]
\label{lem:Bartal}
Let $(Z,d)$ be compact and let $m$ be a finite Borel measure on $Z$ with
$m(Z)>0$.  Suppose
\[
 0<\Delta<2\diam(Z), \qquad t\in\{2,3,\ldots\}.
\]
There are nonempty disjoint compact sets
$P,Q\subseteq Z$, with $m(P)>0$, such that, on writing
$Q^c=Z\setminus Q$,
\begin{align}
 d(P,Q)&\ge \frac{\Delta}{8t},
 &\diam(Q^c)&\le\frac \Delta2,
 &\diam(P)&\le\Bigl(\frac12-\frac1{4t}\Bigr)\Delta,
 \label{eq:Bartal-geometry}
\end{align}
and
\begin{equation}
\label{eq:Bartal-sharp}
 m(P)\ge m(Q^c)
 \Bigl(\frac{m^{\Delta/2}(Q^c)}{m^\Delta(Z)}\Bigr)^{1/t},
 \text{ and in particular, }\frac{m(P)}{m^{\Delta/2}(P)^{1/t}} \geq \frac{m(Q^c)}{m^\Delta(Z)^{1/t}}.
\end{equation}
\end{lemma}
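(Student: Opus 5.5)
Our plan is to prove \autoref{lem:Bartal} by the standard ball-growing argument of Bartal, with the concentric radii tuned to the parameter $t$ and with the localized masses $m^{\Delta/2}$ and $m^\Delta$ in place of total masses.

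\textbf{Choosing the center.} Choose a center $x_0\in Z$ attaining (or nearly attaining, by compactness and upper semicontinuity of $x\mapsto m(B_d(x,\Delta/8))$) the supremum defining $m^{\Delta/2}(Z)$. Then $m(B_d(x_0,\Delta/8))$ is maximal among balls of radius $\Delta/8$. Since $\Delta<2\diam(Z)$, any ball of radius $\Delta/4$ misses some point of $Z$, which will make $Q$ nonempty.

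\textbf{Nested radii.} Consider the radii
\[
 r_j=\frac{\Delta}{8}+\frac{j\Delta}{8t},\qquad j=0,1,\ldots,t,
\]
so that $r_t=\Delta/4$, and set $B_j=B_d(x_0,r_j)$. We have the telescoping identity
\[
 \prod_{j=0}^{t-1}\frac{m(B_j)}{m(B_{j+1})}=\frac{m(B_0)}{m(B_t)}.
\]
Hence some index $j$ satisfies
\[
 m(B_j)\ge m(B_{j+1})\Bigl(\frac{m(B_0)}{m(B_t)}\Bigr)^{1/t}.
\]

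\textbf{Defining the pieces.} For this $j$, let $P=B_d(x_0,r_j)$ and $Q=Z\setminus B^\circ_d(x_0,r_j+\Delta/(8t))$. Both are compact, and $d(P,Q)\ge\Delta/(8t)$. The complement $Q^c$ is contained in $B_d(x_0,\Delta/4)$, so $\diam(Q^c)\le\Delta/2$. We also need $\diam(P)\le 2r_j\le 2\bigl(\Delta/4-\Delta/(8t)\bigr)=(1/2-1/(4t))\Delta$, which uses $j\le t-1$.

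\textbf{The measure estimate.} Up to the difference between the open ball $B_j^\circ$ and the closed ball $B_{j+1}$, which is handled by nudging radii slightly, we have $m(Q^c)\le m(B_{j+1})$. Next, $m(B_0)\ge m^{\Delta/2}(Q^c)$, because every point of $Q^c$ has its $\Delta/8$-ball of mass at most $m(B_0)$ by maximality of the center. Finally, $m(B_t)=m(B_d(x_0,\Delta/4))\le m^\Delta(Z)$. Combining these three bounds gives the first inequality in \eqref{eq:Bartal-sharp}, and $m(P)\ge m(B_0)>0$.

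\textbf{Second form of the estimate.} This follows from $m^{\Delta/2}(P)\le m^{\Delta/2}(Q^c)$, which holds by monotonicity \eqref{eq:localized-monotonicity} since $P\subseteq Q^c$. One then rearranges:
\[
 m(P)\,m^{\Delta/2}(P)^{-1/t}\ge m(Q^c)\,m^{\Delta/2}(Q^c)^{1/t-1/t}\,m^\Delta(Z)^{-1/t}.
\]

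\textbf{Main obstacle.} The delicate step is this measure bookkeeping, specifically the open/closed boundary issue in the comparison $m(Q^c)\le m(B_{j+1})$. I would resolve it either by taking $Q^c$ to be the open ball of radius $r_{j+1}$, so that $Q^c\subseteq B_{j+1}$ directly, or by an $\eta$-perturbation of the radii followed by a limiting argument.
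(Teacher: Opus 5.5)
Your proof is correct: it is the standard Bartal ball-growing argument (an exact maximizer of $m(B_d(x,\Delta/8))$ via upper semicontinuity, radii $\Delta/8+j\Delta/(8t)$, then pigeonhole on the ratios $m(B_j)/m(B_{j+1})$), which is the argument behind the lemma as the paper uses it — the paper gives no proof here and instead cites \cite[Lemma~2.2]{mendel2021dvoretzkytype}. The open/closed issue you flag as the main obstacle is not one: with your own definition, $Q^c=B_d^\circ(x_0,r_{j+1})$ already lies in the closed ball $B_{j+1}$, so $m(Q^c)\le m(B_{j+1})$ holds directly and no perturbation or limiting argument is needed.
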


\begin{lemma}[Repeated decomposition]
\label{lem:repeated-decomp}
Let $(X,d)$ be a compact metric space, let $m$ be a finite Borel measure
on $X$, let $Z\subseteq X$ be compact with $m(Z)>0$, and let
$t\in\{2,3,\ldots\}$.  If $\Delta\ge\diam(Z)$, then there is a finite
set $I$ together with
pairwise disjoint Borel sets $(E_i)_{i\in I}$ contained in $Z$ and compact sets
$P_i\subseteq E_i$ with $m(P_i)>0$ satisfying:
\begin{enumerate}[label=(\alph*)]
\item
\(
 \sum_{i\in I}m(E_i)=m(Z)
\);
\item\label{repeat:local}
 $m(E_i)\le m^\Delta(Z)$ for every $i\in I$;
\item\label{repeat:diameter}
 $\diam(E_i)\le\Delta/4$ for every $i\in I$.  Moreover, all indices
 except possibly one satisfy
 \[
  \diam(P_i)\le\Bigl(\frac14-\frac1{8t}\Bigr)\Delta;
 \]
 the possible exceptional index, called the \emph{final residual},
 satisfies $P_i=E_i$;
\item\label{repeat:separation}
 $d(P_i,P_j)\ge \Delta/(16t)$ whenever $i,j\in I$, $i\neq j$;
\item\label{repeat:charge}
For every $i\in I$,
\begin{equation}
\label{eq:repeat-strong}
 \frac{m(P_i)}
 {\bigl(m^{\Delta/4}(P_i)\bigr)^{1/t}}
 \ge
 \frac{m(E_i)}{\bigl(m^\Delta(Z)\bigr)^{1/t}}.
\end{equation}
\end{enumerate}
\end{lemma}

\begin{proof}
If $\Delta=0$, then $Z$ is a singleton.  Taking $I=\{0\}$ and
$P_0=E_0=Z$ proves all the assertions.  Assume henceforth that $\Delta>0$.

We iteratively peel off subsets.
Start with
$R_0=Z$.
As long as
$m(R_{i-1})>0$ and $\diam(R_{i-1})>\Delta/4$, apply
\autoref{lem:Bartal} to $R_{i-1}$, with the restriction of $m$ to
$R_{i-1}$ and with parameter $\Delta/2$.  Denote its output by
$P_i,Q_i$, put
\[
E_i=R_{i-1}\setminus Q_i,\qquad R_i=Q_i.
\]
This process is finite.  Otherwise, choosing a point from each $P_i$
would give infinitely many points that are pairwise separated by
$\Delta/(16t)$, contradicting the compactness of $Z$.
Notice also that the stopping condition makes every
application legitimate, since
\[
 \frac{\Delta}2<2\diam(R_{i-1}).
\]

Let $f\ge0$ be the number of iterations, and let $R_f$ be the final
residual.  If $m(R_f)>0$, record the additional pair
$P_{f+1}=E_{f+1}=R_f$ and set $I=\{1,\ldots,f+1\}$.  Otherwise omit it
and set $I=\{1,\ldots,f\}$.

Every $P_i$ is compact and has positive measure, and every $E_i$ is Borel.
Furthermore, the sets $(E_i)_{i\in I}$ are pairwise disjoint.  The complement
of their union in $Z$ is empty when the final residual is recorded and equals
the null set $R_f$ otherwise.  Hence their measures sum to $m(Z)$.
\autoref{repeat:diameter} follows from \eqref{eq:Bartal-geometry} for
$i\le f$ and from the stopping condition for the final residual
$i=f+1$, when it is present.
Since $P_i\subseteq E_i$, every $E_i$ is nonempty.  Choose $z_i\in E_i$.
Then
\[
 E_i\subseteq B_d(z_i,{\Delta}/4)\cap Z,
\text{\quad and therefore\quad}
 m(E_i)\le m(B_d(z_i,{\Delta}/4)\cap Z)
 \le m^\Delta(Z).
\]
This proves \autoref{repeat:local}.

For every $1\leq i<j\leq |I|$, we have
$P_j\subseteq R_{j-1}\subseteq R_i=Q_i$.  Therefore
\[
 d(P_i,P_j)\ge d(P_i,Q_i)\ge\frac{\Delta}{16t}.
\]

It remains to prove the charge inequality.  For $i\leq f$,
\eqref{eq:Bartal-sharp} and \eqref{eq:localized-monotonicity} yield
\[
 \frac{m(P_i)}
 {\bigl(m^{\Delta/4}(P_i)\bigr)^{1/t}}
 \ge
 \frac{m(E_i)}
 {\bigl(m^{\Delta/2}(R_{i-1})\bigr)^{1/t}}
 \ge
 \frac{m(E_i)}
 {\bigl(m^\Delta(Z)\bigr)^{1/t}}.
\]
Finally, if $f+1\in I$, then
$E_{f+1}=P_{f+1}\subseteq Z$, so
\eqref{eq:localized-monotonicity} gives
\[
 \frac{m(P_{f+1})}
 {\bigl(m^{\Delta/4}(P_{f+1})\bigr)^{1/t}}
 \ge
 \frac{m(E_{f+1})}{\bigl(m^\Delta(Z)\bigr)^{1/t}}.
\]
This proves \autoref{repeat:charge}.
\end{proof}

\section{The original ultrametric skeleton theorem}
\label{sec:warmup}

Most applications of the ultrametric skeleton theorem use only its original
form from~\cite{MN-skeleton}, without a lower bound on the skeleton
measure.  We give a direct and short proof of this one-sided theorem
using the decomposition from the preceding section.

\begin{theorem}
\label{thm:upper-skeleton}
Let $(X,d,\mu)$ be a compact metric probability space.
For every $t\in\{2,3,\ldots\}$ there exist a compact set
$S\subseteq X$, an ultrametric $\rho$ on $S$, and a Borel probability
measure $\nu$ supported on $S$ such that
\begin{enumerate}[label=(\roman*)]
\item\label{upper-skeleton:distortion}
 $d(x,y)\le\rho(x,y)\le16t\,d(x,y)$ for all $x,y\in S$;
\item\label{upper-skeleton:measure}
 for every $x\in X$ and $r\ge0$,
 \begin{equation}
 \label{eq:upper-skeleton}
  \nu(B_d(x,r))
  \le \mu\bigl(B_d(x,33tr)\bigr)^{1-1/t}.
 \end{equation}
\end{enumerate}
\end{theorem}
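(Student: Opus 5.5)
We construct $S$ by a recursive tree of applications of \autoref{lem:repeated-decomp} with geometrically decreasing scales, and define $\nu$ by a probability flow down that tree. Set $\Delta_0=\diam(X)$, start with the root set $P_\emptyset=X$ and the measure $m=\mu$, and take $\Delta_k=4^{-k}\Delta_0$. Given a node with compact set $P$ of positive $\mu$-measure, $\diam(P)\le\Delta_k$, we apply \autoref{lem:repeated-decomp} to $Z=P$ with $m=\mu|_P$ and $\Delta=\Delta_k$. This produces children $P_i\subseteq E_i$, which satisfy $\diam(P_i)\le\Delta_{k+1}$ and are pairwise $\Delta_k/(16t)$-separated. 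We then recurse on each $P_i$; if some $P_i$ is a singleton we stop there. We let $S=\bigcap_k\bigcup_{\text{level }k}P$. This is compact, being a decreasing intersection of finite unions of compact sets.

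We define $\rho(x,y)=\Delta_k$, where $k$ is the deepest level at which $x$ and $y$ lie in a common node. This is an ultrametric. Since distinct children at level $k$ are $\Delta_k/(16t)$ apart, we get $d(x,y)\ge\Delta_k/(16t)$. Since $\diam\le\Delta_k$, we get $d\le\rho$. This gives $\rho\le16t\,d$, which is item \ref{upper-skeleton:distortion}.

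For the measure we use the flow
\[
 \nu(P_i)=\nu(P)\,\frac{w_i}{\sum_j w_j},\qquad w_i=\frac{\mu(P_i)}{\mu^{\Delta_k/4}(P_i)^{1/t}},
\]
and extend by Carathéodory/Kolmogorov consistency to a probability measure on $S$. The key step is to prove by induction that every node $P$ at level $k$ satisfies
\[
 \nu(P)\le \mu^{\Delta_k}(P)^{1-1/t}.
\]
For the induction, note that \eqref{eq:repeat-strong} summed over $i$ gives $\sum_j w_j\ge \mu(P)/\mu^{\Delta_k}(P)^{1/t}$. Hence
\[
 \nu(P_i)\le \mu^{\Delta_k}(P)^{1-1/t}\cdot\frac{\mu^{\Delta_k}(P)^{1/t}}{\mu(P)}\cdot\frac{\mu(P_i)}{\mu^{\Delta_{k+1}}(P_i)^{1/t}}.
\]
The factor $\mu(P)/\mu^{\Delta_k}(P)^{1-1/t}$ is not obviously favorable. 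I therefore expect to need a stronger invariant, namely
\[
 \nu(P)\le \frac{\mu(P)}{\mu^{\Delta_k}(P)^{1/t}}\cdot\lambda,
\]
with $\lambda$ the normalization at the root. Telescoping the ratios $\mu(P_i)/\mu^{\cdot}(P_i)^{1/t}$ against the $\sum_j w_j$ lower bounds gives
\[
 \nu(P)\le \frac{\mu(P)}{\mu^{\Delta_k}(P)^{1/t}}\le \mu^{\Delta_k}(P)^{1-1/t}\cdot\frac{\mu(P)}{\mu^{\Delta_k}(P)}.
\]
This last factor is the problem. Making it bounded is the main obstacle, and it is where the "core--envelope" bookkeeping enters. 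My plan is to charge instead to $E_i$: property \ref{repeat:local} gives $\mu(E_i)\le\mu^{\Delta_k}(P)$, and $P_i$ (the core) sits inside $E_i$ (the envelope). So the invariant I would carry is
\[
 \nu(P)\le \frac{\mu(P)}{\mu^{\Delta}(\text{parent})^{1/t}}\le \mu(E)^{1-1/t},
\]
with $E$ the envelope of the node. That is, $\nu(\text{node})\le \mu(E_{\text{node}})\,\mu^{\Delta}(\text{parent})^{-1/t}$, and this is at most $\mu(E)^{1-1/t}$ because $\mu(E)\le\mu^\Delta(\text{parent})$.

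Finally, we handle a ball $B_d(x,r)$. Choose $k$ with $\Delta_k/(16t)\le 2r$, roughly the level at which the ball meets at most one child of a given node; this works because the children of any node are separated by $\Delta_k/(16t)$. Descending the tree, the points of $S\cap B_d(x,r)$ lie in a single node at the level where the separation first exceeds $2r$. Its envelope $E$ has diameter $\le\Delta_{k}/4$, which is $O(tr)$, and meets the ball. Hence $E\subseteq B_d(x,r+O(tr))$, and the constant works out to $33t$. Therefore
\[
 \nu(B_d(x,r))\le\mu(E)^{1-1/t}\le\mu(B_d(x,33tr))^{1-1/t}.
\]
One subtlety remains: the ball can meet several nodes at coarser levels while only one at the chosen level, so I must choose $k$ as the first level at which the separation exceeds $2r$. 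The boundary case $r=0$ follows by continuity, or directly from the singleton leaves.
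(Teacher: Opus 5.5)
Your tree, ultrametric, distortion bound and ball argument are essentially the paper's. The measure estimate, however, has a genuine gap. The flow you define, $\nu(P_i)=\nu(P)\,w_i/\sum_j w_j$ with $w_i=\mu(P_i)/\mu^{\Delta_{k+1}}(P_i)^{1/t}$, cannot support the invariant $\nu(P_i)\le\mu(E_i)\,\mu^{\Delta_k}(P)^{-1/t}$ that you end up proposing. You never change the flow, and you never prove the inductive step.

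Telescoping with \eqref{eq:repeat-strong} gives only $\nu(P_i)\le w_i$. But \eqref{eq:repeat-strong} bounds $w_i$ from \emph{below} by $\mu(E_i)\,\mu^{\Delta_k}(P)^{-1/t}$, so it points the wrong way. A child whose core is spread out at scale $\Delta_{k+1}/4$ can dominate $\sum_j w_j$ and absorb almost all of $\nu(P)$, however small its envelope.

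For example, let $X$ consist of $M\ge2$ atoms of mass $1/(2M)$, pairwise $\Delta_0/8$ apart, and one atom of mass $1/2$ at distance $\Delta_0$ from all of them. Take $P_1=E_1$ to be the $M$ light atoms and $P_2=E_2$ the heavy atom. Properties (a)--(e) hold, $\mu^{\Delta_0}(X)=1/2$, and $w_1/w_2=M^{1/t}$. Hence $\nu(P_1)\to1$ as $M\to\infty$, whereas your invariant (and even the weaker $\nu(P_1)\le\mu(E_1)^{1-1/t}$) requires $\nu(P_1)\le2^{-(1-1/t)}$.

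If you nest this kind of domination over roughly $\log_4(33t)$ further levels inside a set of small $\mu$-mass, still with decompositions satisfying (a)--(e), you get a ball that violates \eqref{eq:upper-skeleton}. So the $w$-weighting itself fails without a doubling assumption. It is essentially the flow of the doubling-space argument, where the doubling constant is what controls the ratio $\mu(P)/\mu^{\Delta_k}(P)$.

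The missing idea is to split the flow in proportion to the envelope masses. Since $\sum_i\mu(E_i)=\mu(P)$, the rule $\nu(P_i)=\nu(P)\,\mu(E_i)/\mu(P)$ defines a probability flow. One then proves two bounds together by induction: $\nu(P)\le\mu(P)/\mu^{\Delta_k}(P)^{1/t}$ and $\nu(P)\le\mu(E_P)^{1-1/t}$. The first bound at $P$ gives
\[
 \nu(P_i)=\nu(P)\,\frac{\mu(E_i)}{\mu(P)}\le\frac{\mu(E_i)}{\mu^{\Delta_k}(P)^{1/t}}.
\]
The right-hand side is at most $\mu(P_i)/\mu^{\Delta_{k+1}}(P_i)^{1/t}$ by \eqref{eq:repeat-strong}, and at most $\mu(E_i)^{1-1/t}$ by (b). This is exactly \eqref{eq:prob-flow} and \eqref{eq:warmup-flow-bound} in the paper. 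Note that the numerator must be the envelope mass: your first displayed form, with the core mass $\mu(P)$ over the parent's localized mass, is too strong.

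Two smaller repairs are also needed.
\begin{itemize}
\item Do not stop at singletons. The lemma returns a singleton as its own unique child, which keeps the tree leafless and prevents your definition of $S$ from losing points.
\item In the ball step, take the first vertex on the branch of some $y\in B_d(x,r)\cap S$ whose label is at most $32tr$. Then the separations at all of its strict ancestors exceed $2r$, and its envelope lies in $B_d(y,32tr)\subseteq B_d(x,33tr)$.
\end{itemize}
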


\begin{definition}[Tree boundary]
\label{def:tree-boundary}
Let $\T$ be a locally finite rooted tree without leaves, meaning that
every vertex has finitely many children and at least one child.  Its
\emph{boundary} $\bdry\T$ is the set of infinite branches starting at
$\rootv_\T$.  For $u\in\T$, the corresponding cylinder is
\[
 \bdry u=\{b\in\bdry\T:u\in b\}.
\]
Suppose that $\delta:\T\to[0,\infty)$ is nonincreasing along branches,
tends to zero along every branch, and is positive at every vertex having
at least two children.  For distinct $a,b\in\bdry\T$, let $a\wedge b\in \T$
denote their last common vertex and set
\begin{equation}
\label{eq:tree-boundary-ultrametric}
 \rho(a,b)=\delta(a\wedge b),
 \qquad \rho(a,a)=0.
\end{equation}
Then $\rho$ is an ultrametric, its positive-radius balls are
cylinders, and $(\bdry\T,\rho)$ is compact.  Indeed, finite branching and
König's lemma imply that the labels tend to zero uniformly across each
level, so the boundary is totally bounded; a Cauchy sequence of branches
stabilizes on every finite initial segment and hence converges.
These assertions are standard; see, for example,
\cite[Lemma~7]{mendel2021simple}.
\end{definition}

\begin{definition}[Probability flow]
\label{def:probability-flow}
A \emph{probability flow} on $\T$ is a map
$\sigma:\T\to[0,\infty)$ such that
\[
 \sigma(\rootv_\T)=1,
 \qquad
 \sigma(u)=\sum_{v\in\parent_\T^{-1}(u)}\sigma(v)
 \quad(u\in\T).
\]
Every probability flow determines a unique Borel probability measure
$\overline\nu$ on $\bdry\T$ satisfying
$\overline\nu(\bdry u)=\sigma(u)$ for every $u\in\T$.  Indeed, the flow
defines a finitely additive set function on finite unions of cylinders.
This set function is a premeasure: if such a union is written as a
countable disjoint union of sets of the same form, compactness and the fact
that cylinders are clopen imply that only finitely many terms are nonempty.
The conclusion follows from the Carath\'eodory extension theorem
\cite[Theorem~1.53]{Klenke}.
\end{definition}

\begin{proof}[Proof of \autoref{thm:upper-skeleton}]
If $\diam(X)=0$, then $X$ is a singleton, and the conclusion is immediate.  Henceforth assume that $\diam(X)>0$.
We recursively construct a locally finite rooted tree
$\T$ without leaves, compact cores $C_u\subseteq X$ of positive measure,
Borel envelopes $C_u\subseteq E_u\subseteq X$, and labels
$\delta(u)>0$.  At the root put
\[
 C_{\rootv_\T}=E_{\rootv_\T}=X,
 \qquad
 \delta(\rootv_\T)=\diam(X).
\]
Suppose inductively that $\diam(C_u)\le\delta(u)$, and apply
\autoref{lem:repeated-decomp} to $C_u$ with parameter
$\Delta=\delta(u)$.  For every resulting pair $(P_v,E_v)$, create a child
$v$ of $u$, put $C_v=P_v$, and assign the label
\[
 \delta(v)=\frac{\delta(u)}4.
\]
The output of \autoref{lem:repeated-decomp} gives for $v\neq w$, $v,w\in \parent_\T^{-1}(u)$,
\begin{align}
 C_v\subseteq E_v&\subseteq C_u,
 &\diam(E_v)&\le\delta(v),                                      \label{eq:warmup-scales}\\
 d(C_v,C_w)&\ge\frac{\delta(u)}{16t},                  \label{eq:warmup-separation}\\
 \sum_{v\in\parent_\T^{-1}(u)}\mu(E_v)&=\mu(C_u),
 &\mu(E_v)&\le\mu^{\delta(u)}(C_u).                            \label{eq:warmup-envelopes}
\end{align}
Moreover, \eqref{eq:repeat-strong} becomes
\begin{equation}
\label{eq:warmup-charge}
 \frac{\mu(E_v)}
 {\bigl(\mu^{\delta(u)}(C_u)\bigr)^{1/t}}
 \le
 \frac{\mu(C_v)}
 {\bigl(\mu^{\delta(v)}(C_v)\bigr)^{1/t}}.
\end{equation}

Define $\sigma$ recursively by
\begin{equation}\label{eq:prob-flow}
 \sigma(\rootv_\T)=1,
 \qquad
 \sigma(v)=\sigma(u)\frac{\mu(E_v)}{\mu(C_u)}
 \quad(v\in\parent_\T^{-1}(u)).
\end{equation}
All denominators in this definition are positive.  Moreover,
\[
 \sum_{v\in\parent_\T^{-1}(u)}\sigma(v)
 =\frac{\sigma(u)}{\mu(C_u)}
   \sum_{v\in\parent_\T^{-1}(u)}\mu(E_v)
 =\sigma(u).
\]
Thus $\sigma$ is a probability flow.
We claim that, for every $u\in\T$,
\begin{equation}
\label{eq:warmup-flow-bound}
 \sigma(u)\le
 \frac{\mu(C_u)}
 {\bigl(\mu^{\delta(u)}(C_u)\bigr)^{1/t}},
 \qquad
 \sigma(u)\le\mu(E_u)^{1-1/t}.
\end{equation}
At the root, the second inequality is an equality, while the first follows
from
\[
 \mu^{\delta(\rootv_\T)}(C_{\rootv_\T})
 \le \mu(C_{\rootv_\T})=1.
\]
If the inequalities hold at $u$, then
\begin{align*}
 \sigma(v)
 &=\sigma(u)\frac{\mu(E_v)}{\mu(C_u)}
\le
 \frac{\mu(E_v)}
 {\bigl(\mu^{\delta(u)}(C_u)\bigr)^{1/t}}
 \le
 \frac{\mu(C_v)}
 {\bigl(\mu^{\delta(v)}(C_v)\bigr)^{1/t}}.
\end{align*}
Here the first inequality is the inductive hypothesis and the second is
\eqref{eq:warmup-charge}.
The middle expression is at most
$\mu(E_v)^{1-1/t}$ by \eqref{eq:warmup-envelopes}.  This proves
\eqref{eq:warmup-flow-bound} by induction.

The labels decrease by exactly a factor of four.  Along every branch the
cores are nested nonempty compact sets, and \eqref{eq:warmup-scales} shows
that their diameters tend to zero.  Their intersection therefore consists
of a single point; denote it by $\pi(b)$ for a branch $b\in\bdry\T$.  If
$a,b\in\bdry\T$ are distinct, $u=a\wedge b$, and $v,w$ are the children of
$u$ on the two branches, then their endpoints belong to $C_v$ and $C_w$,
respectively.  Hence \eqref{eq:warmup-scales} and
\eqref{eq:warmup-separation} give
\[
 \frac{\delta(u)}{16t}
 \le d(\pi(a),\pi(b))\le\delta(u).
\]
In particular, $\pi$ is injective and is $1$-Lipschitz from the compact
boundary ultrametric with label map $\delta$.  Thus
$S=\pi(\bdry\T)$ is compact.  Transfer the boundary ultrametric to $S$.
The preceding inequalities give
\[
 d(\pi(a),\pi(b))
 \le\delta(u)
 =\rho(\pi(a),\pi(b))
 \le16t\,d(\pi(a),\pi(b)).
\]
This proves \autoref{upper-skeleton:distortion}.

Let $\overline\nu$ be the boundary measure induced by $\sigma$, and put
$\nu=\pi_\#\overline\nu$ the pushforward measure on $S$.  Then
\[
 \nu(\pi(\bdry u))=\sigma(u)
 \qquad(u\in\T).
\]
Fix $x\in X$ and $r>0$.  If $B_d(x,r)\cap S$ is nonempty, choose
$y\in B_d(x,r)\cap S$.  Let $b_y=\pi^{-1}(y)$ be the unique branch
corresponding to $y$, and let $u$ be the first vertex of $b_y$ for which
$\delta(u)\le32tr$.  Such a vertex exists because the labels tend to zero.
We claim that
\[
 B_d(x,r)\cap S\subseteq\pi(\bdry u).
\]
This is immediate if $u=\rootv_\T$.  Otherwise, suppose that
$z\in S\setminus\pi(\bdry u)$, and let $b_z=\pi^{-1}(z)$.  Then
$b_y\wedge b_z$ is a strict ancestor of $u$, and the minimality of $u$
gives
\[
 \delta(b_y\wedge b_z)
 \ge\delta(\parent_\T(u))>32tr.
\]
The distortion estimate already proved above yields
\[
 d(y,z)\ge\frac{\rho(y,z)}{16t}
 =\frac{\delta(b_y\wedge b_z)}{16t}>2r.
\]
Thus $z$ cannot also belong to $B_d(x,r)$, proving the claim.
Since $y\in C_u\subseteq E_u$ and
$\diam(E_u)\le\delta(u)$ (by \eqref{eq:warmup-scales} when $u$ is
nonroot and by definition at the root),
\[
 E_u\subseteq B_d(y,\delta(u))
 \subseteq B_d(x,(32t+1)r)
 \subseteq B_d(x,33tr).
\]
Consequently \eqref{eq:warmup-flow-bound} gives
\[
 \nu(B_d(x,r))
 \le\sigma(u)
 \le\mu(E_u)^{1-1/t}
 \le\mu(B_d(x,33tr))^{1-1/t}.
\]
The case $B_d(x,r)\cap S=\varnothing$ is immediate, and the case $r=0$
follows by continuity from above.  This proves
\autoref{upper-skeleton:measure}.
\end{proof}

\begin{remark}
The proof of \autoref{thm:upper-skeleton} follows the same general scheme as
the proof of the one-sided ultrametric skeleton theorem for doubling spaces
in~\cite{mendel2021simple}: a hierarchy obtained from Bartal
decompositions defines the ultrametric, and a probability flow controls its
cylinders.  In the latter proof the flow is constructed from the
subadditive quantity
\[
 \frac{\mu(C_u)}
 {\bigl(\mu^{\diam(C_u)}(C_u)\bigr)^{1/t}},
\]
and the doubling hypothesis is used to compare its denominator with
$\mu(C_u)$.  Here the core--envelope pairs supplied by
\autoref{lem:repeated-decomp} replace that comparison: the envelopes
partition $C_u$ in measure and hence define the flow directly through
\eqref{eq:prob-flow}; the charge inequality~\eqref{eq:repeat-strong} and
\autoref{repeat:local} of \autoref{lem:repeated-decomp} then yield both estimates in
\eqref{eq:warmup-flow-bound}.  In particular, the estimate involving
$E_u$ is what removes any dependence on a doubling constant.
\end{remark}

\section{Regular ultrametric skeleton theorem}
\label{sec:main-proof}

Here we prove the regular ultrametric skeleton theorem, \autoref{thm:main}.
The proof adapts that of \autoref{thm:upper-skeleton} and uses the
net-tree machinery developed in~\cite{mendel2021dvoretzkytype} to obtain
a lower bound on the skeleton measure.
We next recall the relevant terminology from
\cite[Definitions~3.1, 3.3, and~3.5]{mendel2021dvoretzkytype}.

\begin{definition}[Net-tree]
\label{def:net-tree}
Let $\kappa\ge1$.  A $\kappa$-net-tree over a compact metric space
$(X,d)$ is a locally finite rooted tree $\Ntree$ without leaves, together
with a label $\Delta_{\Ntree}:\Ntree\to[0,\infty)$ and a representative
map $\rep:\Ntree\to X$.  Here $\rootv_\Ntree$ denotes the root and
$\parent_\Ntree:\Ntree\setminus\{\rootv_\Ntree\}\to\Ntree$ maps a vertex
to its parent.  Write
$\operatorname{Desc}_{\Ntree}(a)\subseteq\Ntree$ for the set consisting of
$a$ and all its descendants.  The following axioms are required.
\begin{enumerate}[label=(\roman*)]
\item Labels are nonincreasing down the tree and tend to zero along
 every branch.
\item\label{defnet:covering}
 For every vertex $a$,
 \[
 \{\rep(c):c\in\operatorname{Desc}_{\Ntree}(a)\}
 \subseteq B_d\bigl(\rep(a),\Delta_{\Ntree}(a)\bigr).
 \]
\item\label{defnet:packing}
 If $a$ and $b$ are unrelated (neither is an ancestor of the other) and
 $\Delta_{\Ntree}(b)\le\Delta_{\Ntree}(a)$, then
 \[
 B_d\left(\rep(a),
   \frac{\Delta_{\Ntree}(\parent_{\Ntree}(a))}{\kappa}\right)
 \cap\{\rep(c):c\in\operatorname{Desc}_{\Ntree}(b)\}=\varnothing.
 \]
\end{enumerate}
The representative sequence on every branch converges: this follows
immediately from the covering and vanishing-label axioms (see also
\cite[Proposition~3.4]{mendel2021dvoretzkytype}).  The image of the net-tree is
the set of all such branch limits, and the net-tree is \emph{surjective}
if its image is $X$.

For a vertex $a$ and $0<s\le\Delta_{\Ntree}(a)$, its first-crossing cut
at scale $s$ is
\begin{equation} \label{eq:first-crossing-cut}
 \operatorname{Cut}_{\Ntree}(a,s)=
 \begin{cases}
 \{a\},&s=\Delta_{\Ntree}(a),\\
 \{b\in\operatorname{Desc}_{\Ntree}(a):
   \Delta_{\Ntree}(b)\le s<
   \Delta_{\Ntree}(\parent_{\Ntree}(b))\},&s<\Delta_{\Ntree}(a).
 \end{cases}
\end{equation}
\end{definition}

We use the following net-tree constructed in \cite[Section~3]{mendel2021dvoretzkytype}.

\begin{proposition}
\label{prop:net-cells}
Every nonempty compact metric space $(X,d)$ admits a surjective $20$-net-tree
$\Ntree$.  For every $a\in\Ntree$ there is a Borel set
$\pb(a)\subseteq X$, called its partial boundary, with the following
properties.
\begin{enumerate}[label=(\alph*)]
\item\label{net:partition}
 At the root, $\pb(\rootv_{\Ntree})=X$.  Moreover, for
 every vertex $a$ and every $0<s\le\Delta_{\Ntree}(a)$,
 \[
 \pb(a)=\bigsqcup_{b\in\operatorname{Cut}_{\Ntree}(a,s)}\pb(b),
 \]
 where $\sqcup$ denotes a disjoint union.
\item\label{net:covering}
 Every $x\in\pb(a)$ satisfies
 $d(x,\rep(a))\le\Delta_{\Ntree}(a)$.
\item\label{net:packing}
 If $a$ is not the root, then
 \begin{equation} \label{eq:nettree-packing}
 B_d^\circ\left(\rep(a),
       {\Delta_{\Ntree}(\parent_{\Ntree}(a))}/{20}\right)
 \subseteq\pb(a).
 \end{equation}
\item\label{net:finite}
 Every first-crossing cut $\operatorname{Cut}_{\Ntree}(a,s)$ is finite.
\item\label{net:levels}
 If $\diam(X)>0$, then
 $\Delta_{\Ntree}(\rootv_{\Ntree})=\diam(X)$ and every child has one quarter
 of its parent's label.  Thus the labels are
 $\diam(X)4^{-k}$, $k\in\N\cup\{0\}$.
\end{enumerate}
\end{proposition}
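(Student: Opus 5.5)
The plan is to build $\Ntree$ from a nested sequence of nets at the scales $\Delta_k=\diam(X)4^{-k}$, and then define the partial boundaries top-down by a priority-then-nearest assignment rule. If $\diam(X)=0$, take a single infinite path with label $0$ and $\pb\equiv X$; every assertion is then trivial. Otherwise, choose nested finite sets $N_0\subseteq N_1\subseteq\cdots\subseteq X$ by greedy extension. Each $N_k$ is a maximal $\Delta_k/2$-separated subset, and $N_0$ is a single point. Finiteness comes from compactness, and maximality gives that $N_k$ is $\Delta_k/2$-dense.

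The tree is defined as follows. The vertices at depth $k$ are the pairs $(x,k)$ with $x\in N_k$, with $\rep(x,k)=x$ and $\Delta_\Ntree(x,k)=\Delta_k$. The parent of $(x,k)$ is $(x,k-1)$ when $x\in N_{k-1}$. Otherwise it is $(y,k-1)$, where $y$ is a nearest point of $N_{k-1}$ to $x$, with ties broken by a fixed order. Every vertex has a child (its own copy at the next level), so there are no leaves, and item~\ref{net:levels} is built in. Covering axiom~\ref{defnet:covering} follows from a geometric series. A parent--child step at depth $i$ moves the representative by at most $\Delta_i/2$, so descendants of a depth-$k$ vertex lie within $\sum_{i\ge k}\Delta_i/2=\tfrac23\Delta_k\le\Delta_k$. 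For packing axiom~\ref{defnet:packing} with $\kappa=20$, let $a$ at depth $k$ and $b$ at depth $j\ge k$ be unrelated, and compare at depth $k$. The ancestor $b'$ of $b$ at depth $k$ differs from $a$, and since both representatives lie in $N_k$, we get $d(\rep a,\rep b')\ge\Delta_k/2$. However, descendants of $b'$ may drift up to $\tfrac23\Delta_k$, which is too much. I would therefore refine the parent rule, or use a finer separation such as $N_k$ being $\Delta_k/8$-dense but $\Delta_k/2$-separated via an intermediate scale. The goal is a drift bound of the form $d(\rep(c),\rep(b'))\le\Delta_k/4$ for all $c\in\operatorname{Desc}(b')$. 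This is where I expect to spend the effort, since the constant $20$ must absorb separation minus drift against the radius $\Delta_{k-1}/20=\Delta_k/5$. If the naive constants fail, I would follow \cite[Section~3]{mendel2021dvoretzkytype} and choose the parent as the nearest net point while tracking a $\Delta_k/16$-type covering radius.

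Partial boundaries are defined recursively, starting from $\pb(\rootv_\Ntree)=X$. Given $\pb(a)$ at depth $k$, split it among the children $c$ of $a$ by the following rule. A point $x\in\pb(a)$ goes to $c$ if $x\in B^\circ_d(\rep c,\Delta_k/20)$; these balls are pairwise disjoint by separation. Otherwise $x$ goes to the nearest child representative, with ties broken by the fixed order. The resulting sets are Borel. Since every cut $\operatorname{Cut}_\Ntree(a,s)$ is a single level of $\operatorname{Desc}(a)$ (labels depend only on depth), iterating the one-level partition gives item~\ref{net:partition}. Item~\ref{net:finite} follows because the nets are finite.

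Two properties of the partial boundaries remain. For item~\ref{net:packing}, I would prove by induction that $B^\circ_d(\rep c,\Delta_k/20)\subseteq\pb(a)$, which reduces to showing that such a ball lies in $\pb(a)$ in the first place. This is exactly where the packing axiom enters: a point within $\Delta_k/20$ of $\rep(c)$ is far from every representative of an unrelated coarser vertex, so no earlier priority ball or nearest-point choice can divert it. This inductive compatibility of priority balls across levels, together with the drift bound above, is the main obstacle. For item~\ref{net:covering}, let $x\in\pb(c)$ be assigned without priority. Then $x$ is at distance at most $\Delta_{k+1}/2$ from some point of $N_{k+1}$, and I would show that this point is a child of $a$ (via the priority and inductive structure), giving $d(x,\rep c)\le\Delta_{k+1}$. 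Finally, for surjectivity, I would follow the branch of cells $\pb(\cdot)$ containing a given $x$. By item~\ref{net:covering}, the representatives along this branch converge to $x$, so $x$ is in the image.
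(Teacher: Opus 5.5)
The paper does not reprove this proposition. It imports the $20$-net-tree and items (a)--(c) from \cite{mendel2021dvoretzkytype}. Your self-contained route correctly handles (d), (e), the covering axiom and the partition in (a). The two steps you flag, however, are genuine failures rather than unchecked details.

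First, the packing axiom with $\kappa=20$ fails for the tree you define: nested maximal $\Delta_k/2$-separated nets, nearest-point parents, and an arbitrary greedy order. On the line with $\Delta_k=1$, take:
- $0,0.51\in N_k$;
- a new point $0.256\in N_{k+1}$, whose nearest point in $N_k$ is $0.51$;
- a new point $0.133\in N_{k+2}$. It is legitimately absent from $N_{k+1}$, because it lies within $0.123<1/8$ of $0.256$, and its nearest point in $N_{k+1}$ is $0.256$.

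For $k=1$, use $X=\{-3.49,0,0.133,0.256,0.51\}$ and $N_0=\{0\}$. Then $(0.133,k+2)$ descends from $(0.51,k)$, yet it lies at distance $0.133<\Delta_{k-1}/20$ from $\rep((0,k))=0$.

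Your proposed repair cannot work either. By surjectivity, a drift bound $\Delta_k/4$ would make $N_k$ a $\Delta_k/4$-dense set. No $\Delta_k/2$-separated set is $\Delta_k/4$-dense if $X$ contains two points at distance $0.49\Delta_k$ with nothing else within $\Delta_k/4$ of them. In general the drift is at least the covering radius of $N_k$, which can be as large as the separation, so a bound of the form ``separation minus drift'' gives nothing. What is needed is a parent rule that forces every net point close to a coarser representative into that representative's subtree, consistently across all coarser levels. That is exactly what the paper imports.

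Second, your partial boundaries fail independently, even over a genuine $20$-net-tree. Your split compares only distances to the children of the current cell and ignores where nearby net points sit in the tree. Let $X=\{p,p',x,y,w\}$ with $d(p,p')=0.8$, $d(p,x)=0.4$, $d(p,y)=0.49$, $d(p',x)=0.41$, $d(p',y)=0.31$, $d(x,y)=0.1$, and $w$ at distance $4$ from the rest. Your construction can produce $N_0=\{p\}$, $N_1=\{p,p',w\}$, $N_2=N_1\cup\{y\}$ and $N_3=X$, with $y$ attached to $p'$ and $x$ attached to $y$. This is a surjective $20$-net-tree.

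At the root, $x$ lies in no priority ball (radius $1/5$), and its nearest child representative is $p$. Since $(p,j+1)$ is the only child of $(p,j)$, we get $x\in\pb((p,j))$ for every $j\ge1$. Hence $d(x,\rep((p,2)))=0.4>\Delta_2$, contradicting (b). Also $\rep((x,3))=x\notin\pb((x,3))$, contradicting (c). Your surjectivity argument follows the same wrong branch.

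The robust definition lets the tree decide:
- Prove surjectivity directly, e.g.\ by K\"onig's lemma applied to the ancestors of net points $y_m\in N_m$ near $x$.
- Fix an order on children, and let $\beta(x)$ be the lexicographically least branch whose representatives converge to $x$.
- Set $\pb(a)=\{x:a\in\beta(x)\}$.

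Then (a) is immediate, and (b) is the covering axiom passed to the limit. For (c), suppose a branch converging to a point of $B_d^\circ(\rep(a),\Delta_{\Ntree}(\parent_{\Ntree}(a))/20)$ avoided $a$. It would pass through an unrelated vertex with the same label, whose descendants' representatives enter that ball, contradicting packing. Each $\pb(a)$ is Borel, being a finite Boolean combination of the compact images of cylinders.
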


Items \ref{net:partition}--\ref{net:packing} follow from
\cite[Propositions~3.6 and~3.7]{mendel2021dvoretzkytype}; the refinement
identity in \ref{net:partition} is obtained by iterating the child
partition there.  Item~\ref{net:finite} follows from local finiteness and
the vanishing of labels, as noted immediately after
\cite[Definition~3.3]{mendel2021dvoretzkytype}.  Item~\ref{net:levels}
comes from the finite-net construction in the proof of
\cite[Proposition~3.8]{mendel2021dvoretzkytype}.  These are all the
net-tree facts used below.  Indeed, if $a'$ descends from $a$, then
\autoref{net:partition} of \autoref{prop:net-cells} and
\autoref{defnet:covering} of \autoref{def:net-tree} give, respectively,
\begin{equation}
\label{eq:rep-projection}
 \pb(a')\subseteq\pb(a),
 \qquad d(\rep(a'),\rep(a))\le\Delta_{\Ntree}(a).
\end{equation}

The last ingredient is a normalized version of the balanced-subtree
lemma from~\cite{mendel2021dvoretzkytype}.
We include the short proof for the special case needed here; see
also \cite[Lemma~4.3]{mendel2021dvoretzkytype}.

\begin{lemma}[Balanced probability flow]
\label{lem:balanced}
Let $\T$ be a locally finite rooted tree without leaves, and let
$\zeta:\T\to(0,\infty)$ satisfy
\begin{align}
 \zeta(\rootv_\T)&=1,
 &\zeta(u)&\le\sum_{v\in\parent_\T^{-1}(u)}\zeta(v),
 &\zeta(v)&\le\zeta(u) \quad\text{if $v$ descends from $u$}.
 \label{eq:zeta-assumptions}
\end{align}
Then there exist an ancestor-closed rooted subtree without leaves
$\Tprime\subseteq\T$, with the parent relation inherited from $\T$, and
a probability flow $\sigma$ on $\Tprime$ such that
\begin{equation}
\label{eq:balanced-conclusion}
 \sigma(\rootv_\T)=1,
 \qquad
 \frac12\zeta(u)<\sigma(u)\le\zeta(u),
 \qquad
 \sigma(u)=\sum_{v\in\parent_\T^{-1}(u)\cap\Tprime}\sigma(v).
\end{equation}
\end{lemma}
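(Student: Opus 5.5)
We build $\Tprime$ and $\sigma$ simultaneously, level by level, keeping a slightly stronger invariant than the one in \eqref{eq:balanced-conclusion}. The natural normalized quantity is the ratio $\sigma(u)/\zeta(u)$. We maintain that $\sigma(u)\in(\tfrac12\zeta(u),\zeta(u)]$ for every $u$ already placed in $\Tprime$. At the root we put $\sigma(\rootv_\T)=1=\zeta(\rootv_\T)$, so the invariant holds there.

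For the inductive step, suppose $u\in\Tprime$ with $\tfrac12\zeta(u)<\sigma(u)\le\zeta(u)$. List the children $v_1,\dots,v_k$ of $u$ in $\T$; there are finitely many and at least one, since $\T$ is locally finite and has no leaves. By \eqref{eq:zeta-assumptions},
\[
 \sum_i\zeta(v_i)\ge\zeta(u)\ge\sigma(u)\qquad\text{and}\qquad \zeta(v_i)\le\zeta(u)<2\sigma(u)\quad\text{for each } i.
\]

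The key step is to choose a subset $J$ of children and masses $\sigma(v_j)\in(\tfrac12\zeta(v_j),\zeta(v_j)]$ whose sum is exactly $\sigma(u)$. The first thing to try is a greedy choice. Add children in order until the partial sum $Z_m=\zeta(v_1)+\dots+\zeta(v_m)$ first satisfies $Z_m\ge\sigma(u)$; such an $m$ exists by the first inequality above. Take $J=\{1,\dots,m\}$ and scale uniformly, setting $\sigma(v_j)=\lambda\,\zeta(v_j)$ with $\lambda=\sigma(u)/Z_m\le1$. It remains to check $\lambda>\tfrac12$, i.e.\ $Z_m<2\sigma(u)$.
\begin{itemize}
\item If $m=1$, this is exactly $\zeta(v_1)\le\zeta(u)<2\sigma(u)$.
\item If $m\ge2$, minimality of $m$ gives $Z_{m-1}<\sigma(u)$. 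The naive bound $Z_m<\sigma(u)+\zeta(v_m)<3\sigma(u)$ is too weak, so uniform scaling does not work here.
\end{itemize}

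For $m\ge2$ we use a non-uniform assignment instead. Set $\sigma(v_j)=\zeta(v_j)$ for $j<m$; this is allowed, and the sum so far is $Z_{m-1}<\sigma(u)$. Then set
\[
 \sigma(v_m)=\sigma(u)-Z_{m-1}\in\bigl(0,\zeta(v_m)\bigr].
\]
This may be $\le\tfrac12\zeta(v_m)$, which breaks the invariant. The fix is to sort the children so that $\zeta(v_1)\ge\zeta(v_2)\ge\cdots$ and to distinguish two cases for the largest child.
\begin{itemize}
\item If $\zeta(v_1)\ge\sigma(u)$, take $J=\{1\}$ and $\sigma(v_1)=\sigma(u)$. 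Then $\sigma(v_1)\le\zeta(v_1)$, and $\sigma(v_1)>\tfrac12\zeta(u)\ge\tfrac12\zeta(v_1)$, so the invariant holds.
\item Otherwise every $\zeta(v_i)<\sigma(u)$. Take the greedy $m\ge2$ as above and split the excess $Z_m-\sigma(u)<\zeta(v_m)$. Reduce the last two chosen children together, each down to at most half its $\zeta$. Since $\zeta(v_{m-1})\ge\zeta(v_m)$, their combined slack $\tfrac12\bigl(\zeta(v_{m-1})+\zeta(v_m)\bigr)\ge\zeta(v_m)$ exceeds the excess, and staying strictly above half is possible because the excess is strict.
\end{itemize}

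Write $\tilde\sigma(v_j)=\zeta(v_j)$ for $j\le m$, so $\sum_{j\le m}\tilde\sigma(v_j)=Z_m$. The intended assignment keeps $\sigma(v_j)=\tilde\sigma(v_j)$ for $j\le m-2$ and subtracts the excess from $v_{m-1}$ and $v_m$. The clean way to write the split is to remove from these two children amounts proportional to their $\zeta$-values. Their total is $\zeta(v_{m-1})+\zeta(v_m)$, and subtracting the excess, which is less than $\zeta(v_m)\le\tfrac12\bigl(\zeta(v_{m-1})+\zeta(v_m)\bigr)$, leaves a scaling factor strictly greater than $\tfrac12$ and at most $1$.

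Finally, define $\Tprime$ as the set of vertices reached by this recursion. It is ancestor-closed and rooted by construction. It has no leaves, because every selected vertex has $\sigma>0$ and hence selects at least one child. The flow identity in \eqref{eq:balanced-conclusion} holds by construction, so $\sigma$ is a probability flow on $\Tprime$. The main obstacle, and the place to be careful, is the case analysis in the balancing step: keeping every child strictly above half its $\zeta$ while hitting the parent's mass exactly.
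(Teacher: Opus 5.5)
Your proposal is correct and is essentially the paper's argument. After sorting, your greedy prefix $\{v_1,\dots,v_m\}$ is an inclusion-minimal set of children whose $\zeta$-sum reaches $\sigma(u)$, since removing any $v_j$ leaves at most $Z_{m-1}<\sigma(u)$; this is exactly the set $L$ the paper uses, and your Case~1 is its single-child case. The only difference is in how the mass is assigned, and here your detour was unnecessary: in your Case~2 every child has $\zeta(v_i)<\sigma(u)$, so $Z_m<Z_{m-1}+\zeta(v_m)<2\sigma(u)$. Uniform rescaling therefore already works once the children are sorted, which is what the paper does, so the proportional split of the excess over $v_{m-1},v_m$ is valid but not needed.
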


\begin{proof}
Set $\sigma(\rootv_\T)=1$.  Suppose $u$ has been retained and $\sigma(u)$
has been defined with
\begin{equation} \label{eq:balanced-inductive-hyp}
\zeta(u)/2<\sigma(u)\le\zeta(u).
\end{equation}
Choose an
inclusion-minimal set $L\subseteq\parent_\T^{-1}(u)$ for which
$\sum_{v\in L}\zeta(v)\ge\sigma(u)$; it exists by~\eqref{eq:balanced-inductive-hyp} and the subadditivity in
\eqref{eq:zeta-assumptions}.
If $L=\{v\}$, retain only $v$ and put $\sigma(v)=\sigma(u)$.  The defining
property of $L$ gives $\sigma(v)\le\zeta(v)$, while~\eqref{eq:balanced-inductive-hyp} and the monotonicity in
\eqref{eq:zeta-assumptions} give
\[\sigma(v)=\sigma(u)>\zeta(u)/2\geq\zeta(v)/2.\]
If $|L|\ge2$, then every
$v\in L$ satisfies $\zeta(v)<\sigma(u)$, while inclusion-minimality gives
\[\sum_{w\in L\setminus\{v\}}\zeta(w)<\sigma(u).\]
Hence \(\sum_{v\in L}\zeta(v)<2\sigma(u)\). In this case, retain precisely the children of \(u\) that belong to \(L\), and set
\[
 \sigma(v)=
 \frac{\sigma(u)\zeta(v)}{\sum_{w\in L}\zeta(w)}
 \qquad(v\in L).
\]
The desired inequalities and additivity follow.  The set $L$ is nonempty,
so every retained vertex has a retained child.  Iterating over the levels
therefore constructs a subtree $\Tprime$ without leaves, together with
$\sigma$.
\end{proof}

\medskip

We now lift \autoref{lem:repeated-decomp} through the net-tree of
\autoref{prop:net-cells}.

\begin{lemma}[Hierarchy]
\label{lem:hierarchy}
Let $(X,d)$ be compact with $\diam(X)>0$, let $\mu$ be a Borel probability
measure on $X$, and let $t\in\{2,3,\ldots\}$.  There is a universal constant
$c_2\le 5120$ and a $c_2t$-net-tree $\T$ over $X$ (not necessarily
surjective) with label map
$\Delta:\T\to(0,\infty)$.  Moreover, there are positive-measure Borel
``core--envelope'' pairs $(C_u,E_u)_{u\in\T}$ with the following properties.
\begin{enumerate}[label=(\Alph*)]
\item\label{hier:root}
 $C_{\rootv_\T}=E_{\rootv_\T}=X$ and $\Delta(\rootv_\T)=\diam(X)$.
\item\label{hier:nesting}
 For every child $v$ of $u$,
 $C_v\subseteq E_v\subseteq C_u$.  The child envelopes are pairwise
 disjoint, and
 \[
 \sum_{v\in\parent_\T^{-1}(u)}\mu(E_v)=\mu(C_u).
 \]
\item\label{hier:diameter}
 $\diam(C_u)\le \Delta(u)$, $\diam(E_u)\le2\Delta(u)$, and
 $\Delta(v)\le \Delta(u)/2$ for every child $v$ of $u$.
\item\label{hier:separation}
 Distinct children $v,w$ of $u$ satisfy
 \[
 d(C_v,C_w)\ge\frac{\Delta(u)}{32t}.
 \]
\item\label{hier:interior}
 Every nonroot vertex $v$ satisfies
 \begin{equation}
 \label{eq:hierarchy-interior}
 B_d\Bigl(\rep(v),\frac{\Delta(\parent_\T(v))}{c_2t}\Bigr)\subseteq C_v.
 \end{equation}
\item\label{hier:flow}
 There is a probability flow $\sigma$ on an ancestor-closed
 rooted subtree without leaves $\Tprime\subseteq\T$, with the inherited
 parent relation, such that
 \begin{equation}
 \label{eq:hierarchy-flow}
 \frac12\mu(C_u)^{1-1/t}<\sigma(u)\le\mu(E_u)^{1-1/t}
 \qquad(u\in\Tprime).
 \end{equation}
\end{enumerate}
\end{lemma}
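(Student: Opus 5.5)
We construct $\T$ recursively, running \autoref{lem:repeated-decomp} inside the net-tree $\Ntree$ of \autoref{prop:net-cells}, so that every core is a union of partial boundaries.

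\textbf{Cores and envelopes.} At the root put $C_{\rootv_\T}=E_{\rootv_\T}=X$, $\Delta(\rootv_\T)=\diam(X)$, and attach the net-tree vertex $\rootv_\Ntree$. Suppose a vertex $u$ has core $C_u$ with $\diam(C_u)\le\Delta(u)$.
\begin{enumerate}
\item Apply \autoref{lem:repeated-decomp} to $C_u$ with parameter $\Delta(u)$. It returns pairs $(P_i,E_i)$ with $\diam(E_i)\le\Delta(u)/4$, $d(P_i,P_j)\ge\Delta(u)/(16t)$, and the charge inequality \eqref{eq:repeat-strong}.
\item Set $s=\Delta(u)/(c t)$ for a suitable constant $c$. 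Replace each $P_i$ by
\[
C_i=\bigcup\{\pb(b):b\in\operatorname{Cut}_\Ntree(\rootv_\Ntree,s),\ \pb(b)\cap P_i\ne\varnothing,\ \pb(b)\subseteq C_u\}.
\]
\item Each cell has diameter at most $2s$ by \autoref{net:covering}. So the $C_i$ stay at distance at least $\Delta(u)/(16t)-4s\ge\Delta(u)/(32t)$ from one another, and $\diam(C_i)\le\Delta(u)/4+4s$.
\item The new envelope is $E_i\cup C_i$, minus whatever is assigned to the other cores, so that the envelopes stay disjoint and still sum to $\mu(C_u)$.
\item The child's label is $\Delta(v)\approx\Delta(u)/4+4s\le\Delta(u)/2$, and its representative is $\rep(b)$ for a cell $b$ of $C_i$ of large mass.
\end{enumerate}
Note that $C_i$ contains $P_i$ only up to a $\mu$-null error coming from the cells near $\partial C_u$. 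This is why the cores should be made unions of net cells from the very start: then $C_u$ is itself a union of cells at the coarser scale, and by \autoref{net:partition} it is also a union of cells at scale $s$.

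\textbf{Net-tree axioms.}
\begin{itemize}
\item The interior property \eqref{eq:hierarchy-interior} comes from \eqref{eq:nettree-packing}. The ball $B^\circ(\rep(b),\Delta_\Ntree(\parent(b))/20)$ lies in $\pb(b)\subseteq C_v$, and $\Delta_\Ntree(\parent(b))\ge s$. This gives radius $\Delta(u)/(20ct)$, which fixes $c_2=20c\cdot O(1)$; with $c\approx 64\cdot 4$ this lands near $5120$.
\item The covering axiom of \autoref{def:net-tree} holds with $\rep(v)\in C_v$, since descendants' representatives lie in $C_v$ and $\diam(C_v)\le\Delta(v)$.
\item The packing axiom follows from the separation $\Delta(u)/(32t)$. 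For unrelated $a,b$ with $\Delta(b)\le\Delta(a)$, the representatives of descendants of $b$ lie in a core separated from $C_a$ at the level of their last common ancestor. That separation is at least $\Delta(\parent(a))/(32t)$, because labels decrease along the tree.
\end{itemize}

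\textbf{Flow.} Set
\[
\zeta(u)=\frac{\mu(C_u)}{\bigl(\mu^{\Delta(u)}(C_u)\bigr)^{1/t}}\cdot(\text{root normalization}).
\]
Exactly as in \eqref{eq:warmup-flow-bound}, the charge inequality gives
\[
\zeta(u)\le\sum_{v}\frac{\mu(E_v)}{\bigl(\mu^{\Delta(u)}(C_u)\bigr)^{1/t}}\le\sum_v\zeta(v).
\]
The same comparison gives monotonicity along descendants, since $\mu(C_v)\le\mu(C_u)$ and the localized masses compare through \eqref{eq:localized-monotonicity}. So \autoref{lem:balanced} applies and produces $\Tprime$ and $\sigma$ with $\zeta/2<\sigma\le\zeta$. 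Finally, $\zeta(u)\ge\mu(C_u)^{1-1/t}$ because $\mu^{\Delta}(C_u)\le\mu(C_u)$, and $\zeta(u)\le\mu(E_u)^{1-1/t}$ would follow from $\mu(C_u)\le\mu(E_u)$ together with $\mu^{\Delta(u)}(C_u)\ge$ a comparable mass.

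\textbf{Main obstacle.} This last upper bound is where I expect the difficulty. \eqref{eq:warmup-flow-bound} obtained $\sigma\le\mu(E_u)^{1-1/t}$ only along the induction, not for $\zeta$ itself. So I would instead define $\zeta$ as the warm-up flow of \eqref{eq:prob-flow}, which is a genuine flow bounded above by $\mu(E_u)^{1-1/t}$, and compare it with $\mu(C_u)^{1-1/t}$ from below. The charge inequality does give that lower bound after normalizing. The delicate part is reconciling the cell-rounding of $P_i$ with \eqref{eq:repeat-strong}; I would handle it by running \autoref{lem:repeated-decomp} on the finite space of cells, weighted by $\mu$.
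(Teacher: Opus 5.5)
\textbf{Gap 1: the flow step.} This is the main gap, and your proposed fallback does not close it. Even if the charge inequality holds at every level, your $\zeta(u)=\mu(C_u)/\mu^{\Delta(u)}(C_u)^{1/t}$ is only subadditive. It is neither bounded by $\mu(E_u)^{1-1/t}$ nor monotone.

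At the root, $\zeta$ equals $\mu^{\diam(X)}(X)^{-1/t}$. For $k$ equidistant points with uniform mass this is $k^{1/t}$, so normalizing by $\zeta(\rootv_\T)$ loses an unbounded factor in the lower bound.

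Monotonicity fails because \eqref{eq:localized-monotonicity} makes the child's denominator \emph{smaller}, which pushes $\zeta(v)$ up. For instance, if $\diam(C_u)\le\Delta(u)/4$, the only child is a final residual with the same core at a smaller scale. Then typically $\zeta(v)>\zeta(u)$, so \autoref{lem:balanced} does not apply.

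The warm-up flow \eqref{eq:prob-flow} cannot give the lower bound either. In the $k$-point example the root's children are singletons with $C_v=E_v$, and \eqref{eq:prob-flow} gives $\sigma(v)=1/k<\frac12k^{-(1-1/t)}$ once $k>2^t$. More generally, no flow that keeps all $k$ children can work, because their required lower bounds sum to more than $1$. Pruning is therefore forced, and \autoref{lem:balanced} must be fed the right function.

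The missing idea is truncation. With $\widetilde Z_u$ the discretized core described below, the paper uses
\[
 \eta(u)=\min\Bigl\{\frac{\mu(C_u)}{\widetilde\mu^{\Delta(u)}(\widetilde Z_u)^{1/t}},\ \mu(E_u)^{1-1/t}\Bigr\}.
\]
Each child's share $\mu(E_v)/\widetilde\mu^{\Delta(u)}(\widetilde Z_u)^{1/t}$ is at most \emph{both} entries of $\eta(v)$:
\begin{itemize}
\item it is at most the first entry by the transported charge inequality;
\item it is at most the second entry by \autoref{repeat:local} of \autoref{lem:repeated-decomp}, namely $\mu(E_v)\le\widetilde\mu^{\Delta(u)}(\widetilde Z_u)$.
\end{itemize}
Summing over children gives subadditivity. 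Monotonicity follows from $\eta(v)\le\mu(E_v)^{1-1/t}\le\mu(C_u)^{1-1/t}\le\eta(u)$. Both entries are at least $\mu(C_u)^{1-1/t}$, and $\eta(\rootv_\T)=1$ with no normalization needed.

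\textbf{Gap 2: transporting the charge inequality through the cell rounding.} Your last sentence points to the right construction, but the step that makes it work is missing.

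Your main construction decomposes the continuous $C_u$ and then rounds $P_i$ up to cells. (Also, $C_u$ is a union of partial boundaries and need not be compact, so \autoref{lem:repeated-decomp} does not apply to it as stated.) Rounding adds mass both to $\mu(C_v)$ and to the next localized mass, and nothing bounds the latter by $\mu^{\Delta(u)/4}(P_i)$. Since $a/b^{1/t}$ can drop when the same mass is added to $a$ and $b$, \eqref{eq:repeat-strong} is not carried over. Your enlarged envelopes, of diameter up to $\Delta(u)/4+4s$, also lose the local bound needed for the second entry of $\eta$.

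The paper discretizes \emph{before} decomposing:
\begin{itemize}
\item It cuts the core's cells at the four-adic scale $s_u\approx\Delta(u)/(256t)$.
\item It runs \autoref{lem:repeated-decomp} on the finite space $(\widetilde Z_u,\tilde d,\widetilde\mu)$, with $\tilde d$ the distance between representatives.
\item Cores and envelopes are then unions of whole cells. So $\mu(C_v)=\widetilde\mu(\widetilde P_v)$ exactly, and $\mu(E_v)\le\widetilde\mu^{\Delta(u)}(\widetilde Z_u)$ holds verbatim.
\item Passing to the next level uses the mass-preserving projection $\widetilde Z_v\to\widetilde P_v$. It moves representatives by at most $s_u$, which gives $\widetilde\mu^{\delta}(\widetilde Z_v)\le\widetilde\mu^{\delta+8s_u}(\widetilde P_v)$.
\end{itemize}

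To reach the denominator $\widetilde\mu^{\Delta(u)/4}(\widetilde P_v)$ of \eqref{eq:repeat-strong}, a non-final child must get the label $\Delta(u)/4-8s_u$, strictly below $\Delta(u)/4$. This is affordable only because non-final cores have $\tilde d$-diameter at most $(\frac14-\frac1{8t})\Delta(u)$. Only the final residual may get the label $\Delta(u)/4+2s_u$. There $E_v=C_v$, and one compares directly with $\widetilde\mu^{\Delta(u)}(\widetilde Z_u)$ using $\Delta(u)/4+10s_u<\Delta(u)$. Your uniform label of about $\Delta(u)/4+4s$ breaks this comparison for non-final children.

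The geometric parts of your sketch are essentially right: the separation estimate, the interior balls from \eqref{eq:nettree-packing}, and the covering and packing axioms.
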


\begin{proof}
We take $c_2=5120$ and construct $\T$ recursively from a surjective
$20$-net-tree $\Ntree$ and partial boundaries as in
\autoref{prop:net-cells}.  Equip the countable set $\Ntree$ with the
possibly infinite atomic measure determined by
\[
 \widetilde\mu(\{a\})=\mu(\pb(a)).
\quad\text{Thus,}\quad
 \widetilde\mu(A)=\sum_{a\in A}\mu(\pb(a))
\]
for every finite $A\subseteq\Ntree$.  We use this measure only on the
finite antichains appearing in the construction.  When $A$ is such a
same-level antichain, its partial-boundary cells are disjoint and
\begin{equation}
\label{eq:atomic-cell-mass}
 \widetilde\mu(A)=\mu\left(\bigcup_{a\in A}\pb(a)\right).
\end{equation}
A vertex $u\in\T$ is represented by two finite
antichains $\widetilde C_u,\widetilde Z_u\subseteq\Ntree$.
The set $\widetilde C_u$ consists of vertices from a single level of $\Ntree$.
The set \(\widetilde Z_u\) consists of descendants of vertices in \(\widetilde C_u\).
All vertices in \(\widetilde Z_u\) lie on a single level of \(\Ntree\), whose label is no larger than that of \(\widetilde C_u\).
We also maintain that
$\widetilde C_u\subseteq\widetilde Z_{\parent_\T(u)}$, for $u\neq \rootv_\T$.
We write
\[
 C_u=\bigcup_{a\in\widetilde C_u}\pb(a).
\]
At the root $\rootv_\T$ of $\T$, set
$\widetilde C_{\rootv_\T}=\{\rootv_{\Ntree}\}$ and use
$\rep(\rootv_{\Ntree})$ as its representative.  Set
$\Delta(\rootv_\T)=\diam(X)$ and $E_{\rootv_\T}=X$.  Null partial-boundary
cells may be discarded below the root.
\autoref{net:partition} of \autoref{prop:net-cells} gives
$C_{\rootv_\T}=X$.

Suppose inductively that $u\in\T$, $\widetilde C_u\subseteq\Ntree$, and
$\Delta(u)$ have been defined, that $\widetilde C_u$ is a finite antichain
contained in one $\Delta_\Ntree$ level, and that
$\diam(C_u)\le\Delta(u)$.
If $u$ is a nonroot vertex we denote its parent $w=\parent_\T(u)$ and
suppose also that
$\widetilde C_u\subseteq\widetilde Z_w$ and
$\Delta(u)\le\Delta(w)/2$.
Put
\begin{equation}
\label{eq:refinement-scale}
 s_u=\max\left\{\diam(X)4^{-k}:k\in\N\cup\{0\},\
 \diam(X)4^{-k}\le\frac{\Delta(u)}{256t}\right\},
 \qquad
 \frac{\Delta(u)}{1024t}<s_u\le\frac{\Delta(u)}{256t}.
\end{equation}
The maximum exists because the labels of $\Ntree$ are
$\diam(X)4^{-k}$.  At the root,
$s_{\rootv_\T}<\Delta_{\Ntree}(\rootv_{\Ntree})$.  If $u$ is a child of
$w$, then every $a\in\widetilde C_u\subseteq\widetilde Z_w$ has net-tree
label $s_w$.  To see that $s_u\le s_w$, suppose otherwise.  Since both
scales are four-adic, $s_u\ge4s_w>\Delta(w)/(256t)$, contradicting
$s_u\le\Delta(u)/(256t)\le\Delta(w)/(512t)$.
Thus the cuts in the uniform definition
\[
 \widetilde Z_u=
 \bigsqcup_{a\in\widetilde C_u}
 \operatorname{Cut}_{\Ntree}(a,s_u)
\]
are legitimate at every vertex.  When
$s_u=\Delta_{\Ntree}(a)$, equality is covered by the convention
\[
 \operatorname{Cut}_{\Ntree}(a,\Delta_{\Ntree}(a))=\{a\}.
\]
The definition of a first-crossing cut in
\eqref{eq:first-crossing-cut}, \autoref{net:finite} of
\autoref{prop:net-cells}, and the four-adic labels show that
$\widetilde Z_u$ is a finite antichain on one net-tree level and that
every $a\in\widetilde Z_u$ satisfies
\begin{equation}
\label{eq:coarse-parent-scale}
 \Delta_{\Ntree}(a)=s_u,
 \qquad
 \Delta_{\Ntree}(\parent_{\Ntree}(a))=4s_u.
\end{equation}
In particular, its vertices are unrelated and nonroot.
By \autoref{defnet:packing} of \autoref{def:net-tree}, their representatives
are distinct.  Thus \autoref{net:packing} of \autoref{prop:net-cells} gives
$\rep(a)\in\pb(a)\subseteq C_u$ for $a\in\widetilde Z_u$.  For
$a,b\in\Ntree$, write
\(
 \tilde d(a,b)=d(\rep(a),\rep(b)).
\)
All localized masses on the
finite antichains below are computed with respect to $\tilde d$.  Then
\[
 \diam_{\tilde d}(\widetilde Z_u)
 =\diam\{\rep(a):a\in\widetilde Z_u\}
 \le\diam(C_u)\le \Delta(u).
\]
Discard zero-mass atoms from $\widetilde Z_u$ and retain the same notation
for the remaining set.  Before this deletion, the cells indexed by
$\widetilde Z_u$ partition $C_u$ because $\widetilde Z_u$ refines
$\widetilde C_u$.  Only finitely many null cells are deleted, so their
union is $\mu$-null.
Thus \autoref{net:partition} of
\autoref{prop:net-cells} gives
\begin{equation}
\label{eq:discrete-total-mass}
 \widetilde\mu(\widetilde Z_u)=\mu(C_u).
\end{equation}
Apply \autoref{lem:repeated-decomp} to the finite metric space
$(\widetilde Z_u,\tilde d)$, with the restriction of the atomic measure
$\widetilde\mu$ and parameter $\Delta(u)$.
For every resulting core--envelope pair
$(\widetilde P_v,\widetilde E_v)$ of
\autoref{lem:repeated-decomp}, create a child $v$ of $u$ in $\T$ and define
\begin{equation}
\label{eq:child-core-envelope}
 \widetilde C_v=\widetilde P_v,
 \quad
 C_v=\bigcup_{a\in\widetilde P_v}\pb(a),
 \quad
 E_v=\bigcup_{a\in\widetilde E_v}\pb(a).
\end{equation}
Recall that among the core--envelope pairs of
\autoref{lem:repeated-decomp} there may be a distinguished pair, called
the final-residual pair.
Assign the label
\begin{equation}
\label{eq:child-labels}
 \Delta(v)=
 \begin{cases}
   \Delta(u)/4+2s_u,&\text{if $v$ is the final-residual child},\\
  \Delta(u)/4-8s_u,&\text{otherwise}.\\
\end{cases}
\end{equation}
By \eqref{eq:refinement-scale},
$0<\Delta(v)\le\Delta(u)/2$, and
$\widetilde C_v=\widetilde P_v\subseteq\widetilde Z_u$.  Thus these parts
of the induction hypothesis are preserved.
For later use, \eqref{eq:refinement-scale} and $t\ge2$ give
\[
10s_u\le\frac{\Delta(u)}{8t},\qquad
18s_u\le\frac{\Delta(u)}4,\qquad
2s_u\le\frac{\Delta(u)}{32t},\qquad
 \frac{s_u}{5}>\frac{\Delta(u)}{5120t}.
\]
Since
$\widetilde P_v\subseteq\widetilde E_v\subseteq\widetilde Z_u$, the
partial-boundary partition and \eqref{eq:atomic-cell-mass} give
\[
 C_v\subseteq E_v\subseteq C_u,\qquad
 \mu(C_v)=\widetilde\mu(\widetilde P_v),\qquad
 \mu(E_v)=\widetilde\mu(\widetilde E_v).
\]
The sets $(\widetilde E_v)_v$ are pairwise disjoint and their masses sum to
$\widetilde\mu(\widetilde Z_u)=\mu(C_u)$.  Hence the child envelopes are
pairwise disjoint,
and
\[
 \sum_{v\in\parent_\T^{-1}(u)}\mu(E_v)=\mu(C_u).
\]
This proves \autoref{hier:nesting}.  It also shows that every vertex has at
least one child and every core has positive measure.

We verify the geometric assertions.  Every point in a partial-boundary
cell represented at scale $s_u$ is within $s_u$ of its representative.
By \autoref{repeat:diameter} of \autoref{lem:repeated-decomp},
$\diam_{\tilde d}(\widetilde E_v)\le\Delta(u)/4$.
For a non-final-residual child $v$, \autoref{repeat:diameter} of
\autoref{lem:repeated-decomp}
and \eqref{eq:refinement-scale} give
\[
 \diam(C_v)
 \le\frac{\Delta(u)}4-\frac{\Delta(u)}{8t}+2s_u
 \le\frac{\Delta(u)}4-8s_u=\Delta(v).
\]
For the final-residual child $v$, $\widetilde P_v=\widetilde E_v$, and hence
$\diam(C_v)\le\Delta(u)/4+2s_u=\Delta(v)$.  In either case,
\[
 \diam(E_v)\le\frac{\Delta(u)}4+2s_u\le2\Delta(v),
 \qquad 0<\Delta(v)\le\frac{\Delta(u)}2.
\]

By \autoref{repeat:separation} of \autoref{lem:repeated-decomp},
$\tilde d(\widetilde C_v,\widetilde C_w)\ge \Delta(u)/(16t)$.
Passing to their partial-boundary cells loses at most
$2s_u$, so
\[
 d(C_v,C_w)\ge\frac{\Delta(u)}{16t}-2s_u
 \ge\frac{\Delta(u)}{32t}.
\]
Choose any $a\in\widetilde C_v$ and use $\rep(v):=\rep(a)$ as the representative
of $v$ in $\T$.  By
\eqref{eq:coarse-parent-scale} and the packing property of partial
boundaries~\eqref{eq:nettree-packing},
\[
 B_d\Bigl(\rep(v),\frac{\Delta(u)}{5120t}\Bigr)
 \subseteq
 B_d^\circ\left(\rep(v),\frac{s_u}{5}\right)
 =
 B_d^\circ\Bigl(\rep(a),
   \frac{\Delta_{\Ntree}(\parent_{\Ntree}(a))}{20}\Bigr)
 \stackrel{\eqref{eq:nettree-packing}}\subseteq\pb(a)\subseteq C_v.
\]
This proves \eqref{eq:hierarchy-interior} for $c_2=5120$.

These facts also verify that the hierarchy is a $c_2t$-net-tree.
Indeed, every vertex has finitely many children and at least one child,
and the positive labels decrease by a factor of at least two, hence tend
to zero along every branch.  If $w$ descends from $u$, then
$\rep(w)\in C_w\subseteq C_u$ and $\rep(u)\in C_u$, so
\[
 d(\rep(w),\rep(u))\le\diam(C_u)\le\Delta(u),
\]
which is the covering axiom.  If $u$ and $v$ are unrelated, their cores
lie in distinct child cores of their least common ancestor and are
therefore disjoint by \autoref{hier:separation}.  Every representative
below $v$ lies in $C_v$, whereas \autoref{hier:interior} gives
\[
 B_d\Bigl(\rep(u),
   \frac{\Delta(\parent_\T(u))}{c_2 t}\Bigr)\subseteq C_u
 \qquad(u\ne\rootv_\T).
\]
Thus the packing axiom holds (in fact, without needing its label-order
hypothesis).

It remains to establish the flow assertion, \autoref{hier:flow}.
Since $\widetilde Z_u$ is finite and has positive total mass,
\[
 0<\widetilde\mu^{\Delta(u)}(\widetilde Z_u)\le\mu(C_u).
\]

We first compare localized masses at consecutive resolutions.  Let $v$ be
a child of $u$ in $\T$.  Since $\widetilde Z_v$ refines
$\widetilde C_v=\widetilde P_v$, let
\(
 \operatorname{pr}_v:\widetilde Z_v\rightarrow\widetilde P_v
\)
send each vertex to its unique ancestor in $\widetilde P_v$; when no
refinement occurs, this is the identity.  By
\eqref{eq:rep-projection} and \eqref{eq:coarse-parent-scale},
\[
 \tilde d\bigl(b,\operatorname{pr}_v(b)\bigr)\le s_u
 \qquad(b\in\widetilde Z_v),
\]
and the cut partition gives
\[
 \sum_{\substack{b\in\widetilde Z_v\\
                  \operatorname{pr}_v(b)=a}}
 \widetilde\mu(\{b\})
 =\widetilde\mu(\{a\})
 \qquad(a\in\widetilde P_v).
\]
Consequently, for every $\delta\ge0$,
\begin{equation}
\label{eq:localized-comparison}
 \widetilde\mu^\delta(\widetilde Z_v)
 \le
 \widetilde\mu^{\delta+8s_u}(\widetilde P_v).
\end{equation}
Indeed, if $b_0\in\widetilde Z_v$ and
\[
 A=\{b\in\widetilde Z_v:
       \tilde d(b,b_0)\le\delta/4\},
\]
then
\[
 \widetilde\mu(A)
 \le\widetilde\mu(\operatorname{pr}_v(A)),
\]
and $\operatorname{pr}_v(A)$ is contained in the ball in
$\widetilde P_v$ centered at $\operatorname{pr}_v(b_0)$ with radius
\[
 \frac{\delta}{4}+2s_u
 =\frac{\delta+8s_u}{4}.
\]
Taking the supremum over $b_0$ proves
\eqref{eq:localized-comparison}.

For a non-final-residual child, \eqref{eq:child-labels} gives
\[
 \Delta(v)+8s_u=\frac{\Delta(u)}4,
\]
and hence
\begin{equation} \label{eq:tilde-Zv-leq-tilde-Pv}
 \widetilde\mu^{\Delta(v)}(\widetilde Z_v)
 \le\widetilde\mu^{\Delta(u)/4}(\widetilde P_v).
\end{equation}
For the final-residual child,
\[
 \Delta(v)+8s_u=\frac{\Delta(u)}4+10s_u<\Delta(u).
\]
Here the strict inequality follows from \eqref{eq:refinement-scale} and
$t\ge2$.
Since $\widetilde P_v\subseteq\widetilde Z_u$,
\eqref{eq:localized-comparison} followed by
\eqref{eq:localized-monotonicity} gives
\begin{equation} \label{eq:tilde-Zv-leq-tilde-Zu}
 \widetilde\mu^{\Delta(v)}(\widetilde Z_v)
 \le \widetilde\mu^{\Delta(v)+8s_u}(\widetilde P_v)
 \le \widetilde\mu^{\Delta(u)}(\widetilde Z_u).
\end{equation}

It follows that every child $v$ of $u$ satisfies
\begin{equation}
\label{eq:child-capacity}
 \frac{\mu(E_v)}
 {\widetilde\mu^{\Delta(u)}(\widetilde Z_u)^{1/t}}
 \le
 \frac{\mu(C_v)}
 {\widetilde\mu^{\Delta(v)}(\widetilde Z_v)^{1/t}}.
\end{equation}
Indeed, for a non-final-residual child this follows from
\eqref{eq:repeat-strong} and~\eqref{eq:tilde-Zv-leq-tilde-Pv}.
For the final-residual child it follows from $E_v=C_v$ and~\eqref{eq:tilde-Zv-leq-tilde-Zu}.
Moreover,
\autoref{repeat:local} of \autoref{lem:repeated-decomp} gives
$\mu(E_v)\le\widetilde\mu^{\Delta(u)}(\widetilde Z_u)$, and hence
\begin{equation}
\label{eq:child-envelope-capacity}
 \frac{\mu(E_v)}
 {\widetilde\mu^{\Delta(u)}(\widetilde Z_u)^{1/t}}
 \le\mu(E_v)^{1-1/t}.
\end{equation}

Define
\[
 \eta(u)=
 \min\left\{
 \frac{\mu(C_u)}
 {\widetilde\mu^{\Delta(u)}(\widetilde Z_u)^{1/t}},
 \mu(E_u)^{1-1/t}
 \right\}.
\]
Equations \eqref{eq:child-capacity} and
\eqref{eq:child-envelope-capacity} show that
\[
 \frac{\mu(E_v)}
 {\widetilde\mu^{\Delta(u)}(\widetilde Z_u)^{1/t}}
 \le\eta(v)
\]
for every child $v$ of $u$.  Therefore
\[
 \eta(u)
 \le\frac{\mu(C_u)}
 {\widetilde\mu^{\Delta(u)}(\widetilde Z_u)^{1/t}}
 =\sum_{v\in\parent_\T^{-1}(u)}
   \frac{\mu(E_v)}
   {\widetilde\mu^{\Delta(u)}(\widetilde Z_u)^{1/t}}
 \le\sum_{v\in\parent_\T^{-1}(u)}\eta(v).
\]
Thus $\eta$ is subadditive.  Since
$0<\widetilde\mu^{\Delta(u)}(\widetilde Z_u)
\le\mu(C_u)\le\mu(E_u)$,
\begin{equation}
\label{eq:eta-sandwich}
 \mu(C_u)^{1-1/t}
 \le\eta(u)
 \le\mu(E_u)^{1-1/t}.
\end{equation}
Furthermore, if $v$ is a child of $u$, then
\[
 \eta(v)
 \le\mu(E_v)^{1-1/t}
 \le\mu(C_u)^{1-1/t}
 \le\eta(u).
\]
Therefore $\eta$ is nonincreasing along branches.
At the root,
\eqref{eq:eta-sandwich} gives
$\eta(\rootv_\T)=1$.
Applying \autoref{lem:balanced} to
$\eta$ yields an ancestor-closed rooted subtree without leaves
$\Tprime\subseteq\T$ and a probability flow $\sigma$ satisfying
\[
 \frac12\mu(C_u)^{1-1/t}
 <\sigma(u)
 \le\mu(E_u)^{1-1/t}
 \qquad(u\in\Tprime).
\]
This proves \autoref{hier:flow}.
\end{proof}

\medskip

\begin{proof}[Proof of \autoref{thm:main}]
We prove the theorem with $c_1=129$ and $c_2=5120$.
If $\diam(X)=0$, take $S=X$, $\nu=\mu$, and the singleton ultrametric.  Assume
henceforth that $\diam(X)>0$ and apply \autoref{lem:hierarchy}.  We use
only the pruned tree $\Tprime$ and retain the notation
$C_u,E_u,\Delta(u),\sigma(u)$ on it.
Since labels decrease by a factor at least two, every infinite branch
$b=(u_0,u_1,\ldots)$ has $\Delta(u_n)\to0$.  The compact sets
$\overline{C_{u_n}}$ are nested, nonempty, and have diameters tending to
zero.  Their intersection is a singleton; denote it by $\pi(b)$.
If $a,b\in\bdry\Tprime$ are distinct and $u=a\wedge b$, then
their endpoints lie in $\overline{C_u}$ and in the closures of two
distinct child cores of $u$.  Since taking closures does not change the
distance between two sets,
\autoref{hier:diameter} and \autoref{hier:separation} of
\autoref{lem:hierarchy} give
\[
 d(\pi(a),\pi(b))\le \Delta(u)
 \le32t\,d(\pi(a),\pi(b)).
\]
In particular, $\pi$ is injective and $1$-Lipschitz from the compact
ultrametric boundary in \autoref{def:tree-boundary}, with label map
$\Delta$.  Hence $S=\pi(\bdry\Tprime)$ is compact.  Transfer the boundary
ultrametric to $S$ by setting
\[
 \rho(\pi(a),\pi(b))=
 \begin{cases}
  \Delta(a\wedge b),&a\ne b,\\
  0,&a=b.
 \end{cases}
\]
The preceding inequalities show that
$d(x,y)\le\rho(x,y)\le32t\,d(x,y)$ for $x,y\in S$.
This proves \autoref{item:distortion}.

Let $\overline\nu$ be the boundary measure induced by the probability
flow $\sigma$ as in \autoref{def:probability-flow}, and set
$\nu=\pi_\#\overline\nu$, the pushforward measure on $S$.
Extend $\nu$ to a measure on $X$ by
$\nu(B)=\nu(B\cap S)$ for every Borel set $B\subseteq X$.
Since $\pi$ is continuous and injective, $\pi(\bdry u)$ is compact and
$\pi^{-1}(\pi(\bdry u))=\bdry u$; hence
\begin{equation}
\label{eq:cylinder-mass}
 \nu(\pi(\bdry u))=\sigma(u).
\end{equation}

Fix $x\in X$ and $r>0$.  If $B_d(x,r)\cap S=\varnothing$, the upper
estimate is immediate.  Otherwise choose $y\in B_d(x,r)\cap S$.  For
every $z\in B_d(x,r)\cap S$,
\[
 \rho(y,z)\le32t\,d(y,z)\le64tr.
\]
Let $b_y=\pi^{-1}(y)$ be the unique branch corresponding to $y$, and let
$v$ be its first vertex whose label is at most $64tr$.  Then
\[
 B_d(x,r)\cap S\subseteq\pi(\bdry v),
\qquad \Delta(v)\le64tr.
\]
Indeed, if $v$ is not the root, the least common ancestor of $b_y$ and any
branch not passing through $v$ has label at least
$\Delta(\parent_\T(v))>64tr$.  If $v$ is the root, the containment is
immediate.
Moreover, $y\in\overline{C_v}\subseteq\overline{E_v}$, and taking closures
does not change diameter.  Hence
\[
 E_v\subseteq\overline{E_v}\subseteq B_d(y,2\Delta(v))
 \subseteq B_d(x,(128t+1)r)
 \subseteq B_d(x,c_1tr).
\]
Using \eqref{eq:hierarchy-flow} and \eqref{eq:cylinder-mass},
\[
 \nu(B_d(x,r))\le\sigma(v)
 \le\mu(E_v)^{1-1/t}
 \le\mu(B_d(x,c_1tr))^{1-1/t}.
\]
For $r_n\downarrow0$, the closed balls $B_d(x,r_n)$ and
$B_d(x,c_1tr_n)$ decrease to $\{x\}$.  Thus the case $r=0$ follows by
continuity from above of both finite measures.  This proves
\autoref{item:upper}.

Finally, fix $y\in S$ and $r>0$, let $b_y=\pi^{-1}(y)$, and let $v$ be
the first vertex of $b_y$ with $\Delta(v)\le r$.  Since
$y\in\overline{C_v}$ and
$\diam(\overline{C_v})=\diam(C_v)\le r$, we have
$\overline{C_v}\subseteq B_d(y,r)$. Hence,
\begin{equation}
\label{eq:lower-cylinder-contained}
 \pi(\bdry v)\subseteq \overline{C_v} \subseteq B_d(y,r).
\end{equation}
If $v$ is the root, then $r\ge\diam(X)$ and $B_d(y,r)=X$. Thus, the required lower bound follows.  Otherwise $\Delta(\parent_\T(v))>r$, so \autoref{hier:interior} of
\autoref{lem:hierarchy} gives
\[
 B_d\Bigl(\rep(v),\frac{r}{c_2t}\Bigr)
 \subseteq
 B_d\Bigl(\rep(v),\frac{\Delta(\parent_\T(v))}{c_2t}\Bigr)
 \subseteq C_v.
\]
By \eqref{eq:cylinder-mass} and
\eqref{eq:lower-cylinder-contained}, $\nu(B_d(y,r))\ge\sigma(v)$.  Using
\eqref{eq:hierarchy-flow} and the preceding ball inclusion now yields
\begin{align*}
 \nu(B_d(y,r))
 &\ge\sigma(v)>\frac12\mu(C_v)^{1-1/t}\ge\frac12
 \mu\Bigl(B_d\Bigl(\rep(v),\frac{r}{c_2t}\Bigr)\Bigr)^{1-1/t}.
\end{align*}
The ball above is also contained in
$C_v\subseteq\overline{C_v}\subseteq B_d(y,r)$.  Taking $z=\rep(v)$
proves \autoref{item:lower}.
\end{proof}

\begin{remark}
\label{rem:zero-radius}
The preceding lower-bound argument for balls of positive radius does not
by itself imply
\(
 \nu(\{y\})\ge\tfrac12\mu(\{y\})^{1-1/t}
\)
for an atom $y$.  The same boundary issue is implicit
in~\cite{mendel2021dvoretzkytype}.
\end{remark}

\begin{remark}
As noted in~\cite[Section 6]{mendel2021dvoretzkytype}, it is not known whether the
lower bound in \autoref{thm:main} can be strengthened to use concentric
balls.  More precisely, it is not known whether
\[
\nu(B_d(y,r))\ge
 \frac12\mu\bigl(B_d(y,{r}/{(c_2t)})\bigr)^{1-1/t},
\]
for every $y\in S$ and $r>0$.
If true, this strengthening would, by continuity from above, also imply the
corresponding estimate for $r=0$ and hence resolve the issue in
\autoref{rem:zero-radius}.
\end{remark}

In terms of $\eps$, \autoref{thm:main} is stated only for  $\eps\in\bigl\{\frac12,\frac13,\frac14,\ldots\bigr\}$.
The following corollary extends the conclusion to every \(\varepsilon\in(0,1)\).

\begin{corollary}
\label{cor:regular-epsilon}
There exist $c_3,c_4\geq 1$ such that
for every compact metric probability space $(X,d,\mu)$ and every
$\eps\in(0,1)$, there exist a compact subset $S\subseteq X$, an
ultrametric $\rho$ on $S$, and a Borel probability measure $\nu$
supported on $S$ such that,
\[
 d(x,y)\leq\rho(x,y)\leq{64}d(x,y)/{\eps}
 \qquad(x,y\in S),
\]
and, for every $x\in X$ and $r\geq0$,
\[
 \nu(B_d(x,r))
 \leq\mu(B_d(x,{c_3r}/{\eps}))^{1-\eps}.
\]
Moreover, for every $y\in S$ and $r>0$, there is $z\in X$ such that
\[
 B_d(z,{\eps r}/c_4)\subseteq B_d(y,r)
\quad\text{and}\quad
 \nu(B_d(y,r))
 \geq\tfrac14
 \mu(B_d(z,{\eps r/c_4}))^{1-\eps}.
\]
\end{corollary}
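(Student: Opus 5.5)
The plan is to rerun the proof of \autoref{thm:main}, changing only the exponent in the flow. One cannot simply quote the theorem with $t=\lceil 1/\eps\rceil$. That choice gives $1-1/t\ge 1-\eps$, so $\mu(\cdot)^{1-1/t}\le\mu(\cdot)^{1-\eps}$ because $\mu\le1$. This handles the upper estimate, but it points the wrong way for the lower estimate, since $\mu(B)^{1/t-\eps}$ is not bounded below for small $\mu(B)$. Rounding $t$ down instead breaks the upper estimate.

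So I fix the integer $t=\lceil 1/\eps\rceil\in\{2,3,\dots\}$, which satisfies $1/t\le\eps$ and $t\le 1/\eps+1\le 2/\eps$. I keep the whole geometric part of \autoref{lem:hierarchy} (the net-tree, the cores, envelopes, labels and representatives) exactly as it is for this $t$. The geometric constants then convert directly: $32t\le 64/\eps$, $c_1t\le 2c_1/\eps$ so $c_3=2c_1$ works, and $r/(c_2t)\ge \eps r/(2c_2)$ so $c_4=2c_2$ works.

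The only change is in the flow part. I define
\[
 \eta_\eps(u)=\min\Bigl\{\frac{\mu(C_u)}{\widetilde\mu^{\Delta(u)}(\widetilde Z_u)^{\eps}},\ \mu(E_u)^{1-\eps}\Bigr\}.
\]
The key step, and the only real obstacle, is the $\eps$-version of the child-capacity inequality \eqref{eq:child-capacity}. Write $p=\widetilde\mu^{\Delta(v)}(\widetilde Z_v)$ and $q=\widetilde\mu^{\Delta(u)}(\widetilde Z_u)$. The proof of \eqref{eq:child-capacity} already shows $p\le q$ (through \eqref{eq:tilde-Zv-leq-tilde-Pv} combined with \eqref{eq:localized-monotonicity}, or through \eqref{eq:tilde-Zv-leq-tilde-Zu}), and it gives $\mu(C_v)/p^{1/t}\ge\mu(E_v)/q^{1/t}$. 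Because $p/q\le1$ and $\eps\ge1/t$, we have $(p/q)^{\eps}\le(p/q)^{1/t}$. Hence
\[
 \frac{\mu(C_v)}{p^{\eps}}\ge\frac{\mu(C_v)}{p^{1/t}q^{\eps-1/t}}\ge\frac{\mu(E_v)}{q^{\eps}}.
\]
Likewise \autoref{repeat:local} gives $\mu(E_v)\le q$, so $\mu(E_v)/q^\eps\le\mu(E_v)^{1-\eps}$.

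With this in hand, the rest of the argument goes through verbatim. Subadditivity follows from $\sum_v\mu(E_v)=\mu(C_u)$. The sandwich $\mu(C_u)^{1-\eps}\le\eta_\eps(u)\le\mu(E_u)^{1-\eps}$ follows from $0<q\le\mu(C_u)\le\mu(E_u)$. Monotonicity along branches holds, and $\eta_\eps(\rootv_\T)=1$. Applying \autoref{lem:balanced} then produces $\Tprime$ and a flow $\sigma$ with $\tfrac12\mu(C_u)^{1-\eps}<\sigma(u)\le\mu(E_u)^{1-\eps}$.

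Finally I repeat the proof of \autoref{thm:main} word for word with $1-1/t$ replaced by $1-\eps$. This gives distortion $32t\le64/\eps$, the upper bound with radius $c_1tr\le c_3r/\eps$, and the lower bound with constant $\tfrac12\ge\tfrac14$. For the lower bound, I first shrink the interior ball $B_d(\rep(v),r/(c_2t))$ to $B_d(\rep(v),\eps r/c_4)$, which is contained in it; it therefore still lies inside $B_d(y,r)$, and monotonicity of $\mu$ preserves the estimate.
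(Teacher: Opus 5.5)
Your argument is correct, but it reaches exponent $1-\eps$ by a different route than the paper.

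The paper uses \autoref{lem:hierarchy} as a black box. It takes the flow $\sigma$ with $\tfrac12\mu(C_u)^{1-1/t}<\sigma(u)\le\mu(E_u)^{1-1/t}$ and raises it to the power $\alpha=(1-\eps)/(1-1/t)\le 1$. The function $\sigma^\alpha$ stays subadditive by concavity and stays monotone, so the paper applies \autoref{lem:balanced} a second time. This costs a factor $2^{-\alpha}$, which is why its lower constant is $\tfrac14$.

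You instead reopen the proof of \autoref{lem:hierarchy} and change the exponent inside $\eta$ itself. Your only new input is the comparison $p\le q$ between consecutive localized masses, together with $\eps\ge 1/t$. That comparison is indeed implicit in the proof of \eqref{eq:child-capacity}. For non-final-residual children it follows from \eqref{eq:tilde-Zv-leq-tilde-Pv} plus localized monotonicity, since $\widetilde P_v\subseteq\widetilde Z_u$. For the final residual it is \eqref{eq:tilde-Zv-leq-tilde-Zu}.

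Your approach needs only one balancing step and gives the better constant $\tfrac12$. The price is that it is not a formal consequence of the statement of \autoref{lem:hierarchy}. The paper's approach is a generic post-processing that works for any flow sandwiched between powers of $\mu(C_u)$ and $\mu(E_u)$.

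Your constants $c_3=2c_1$ and $c_4=2c_2$ are right. The paper's own computation ($129t\le 258/\eps$) also gives $c_3=258$, not the $129$ stated at the end of its proof.
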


\begin{proof}
The case $\diam(X)=0$ is immediate.  Assume $\diam(X)>0$.
Let $t=\lceil\eps^{-1}\rceil$, and set $\alpha=(1-\eps)/(1-1/t)\in(0,1]$.
Apply \autoref{lem:hierarchy} with the parameter $t$ and let $\sigma$ be the flow on
$\Tprime$ supplied by \eqref{eq:hierarchy-flow}.  The function
$\zeta(u)=\sigma(u)^\alpha$ is nonincreasing and subadditive on
$\Tprime$.  A second application of \autoref{lem:balanced} gives a
subtree and a probability flow $\bar\sigma$ satisfying
\[
 \tfrac12\sigma(u)^\alpha<\bar\sigma(u)\leq\sigma(u)^\alpha.
\quad \text{Consequently,} \quad
 \tfrac14\mu(C_u)^{1-\eps}
 <\bar\sigma(u)\leq\mu(E_u)^{1-\eps},
\]
where the lower bound uses $2^{-1-\alpha}\geq1/4$.  Repeating the
geometric part of the proof of \autoref{thm:main} with $\bar\sigma$
gives the stated estimates with $32t$, $129t$, and $5120t$ in place
of $64/\eps$, $258/\eps$, and $10240/\eps$, respectively. Setting $c_3=129$ and $c_4=10240$, the result
follows from $t\leq2/\eps$.
\end{proof}

\printbibliography[title={References}]

\end{document}